\title{Generalizing the Converse to Pascal's Theorem via Hyperplane
  Arrangements and the Cayley-Bacharach Theorem}
\author{
Will Traves \thanks{Department of Mathematics, U.S. Naval Academy,
traves@usna.edu}}
\theoremstyle{plain}                         
\newtheorem{theorem}{Theorem}
\newtheorem{lemma}[theorem]{Lemma}
\theoremstyle{definition}
\newtheorem{question}[theorem]{Question}
\newtheorem{exercise}[theorem]{Exercise}
\newcommand{\C}{\mathbb{C}}
\newcommand{\V}{\mathbb{V}}
\newcommand{\R}{\ensuremath \mathbb{R}}
\renewcommand{\P}{\ensuremath \mathbb{P}}
\newcommand{\Sec}{\ensuremath{ {\text Sec}}}
\newcommand{\dedication}[1]{\begin{center} {\em #1} \end{center}} 
\begin{document}
\maketitle 

\abstract{Using a new point of view inspired by hyperplane
  arrangements, we generalize the converse to Pascal's Theorem,
  sometimes called the Braikenridge-Maclaurin Theorem. In particular,
  we show that if $2k$ lines meet a given line, colored green, in $k$
  triple points and if we color the remaining lines so that each
  triple point lies on a red and blue line then the points of
  intersection of the red and blue lines lying off the green line lie
  on a unique curve of degree $k-1$.  We also use these ideas to
  extend a second generalization of the Braikenridge-Maclaurin
  Theorem, due to M\"obius.  Finally we use Terracini's Lemma and
  secant varieties to show that this process constructs a dense set of
  curves in the space of plane curves of degree $d$, for degrees $d
  \leq 5$. The process cannot produce a dense set of curves in higher
  degrees. The exposition is embellished with several exercises
  designed to amuse the reader. }

\dedication{Dedicated to H.S.M. Coxeter, who demonstrated a heavenly
  syzygy:  \\ the
  sun and moon aligned with the Earth, through a pinhole. \\ (Toronto,
  May 10, 1994, 12:24:14)} \smallskip

\begin{section}{Introduction}

In Astronomy the word {\em syzygy} refers to three celestial bodies that lie
on a common line. More generally, it sometimes is used to describe
interesting geometric patterns. For example, in a triangle the three median lines
that join vertices to the midpoints of opposite sides meet in a
common point, the centroid, as illustrated in Figure
\ref{trianglesyz}. Choosing coordinates, this fact can be
viewed as saying that three objects lie on a line: there is a linear
dependence among the equations defining the three median lines.  In
Commutative Algebra and Algebraic Geometry, a syzygy refers to any
relation among the generators of a module. 

\begin{figure}[h!t]
\begin{center}
\scalebox{0.8}{\includegraphics[width=0.75\textwidth]{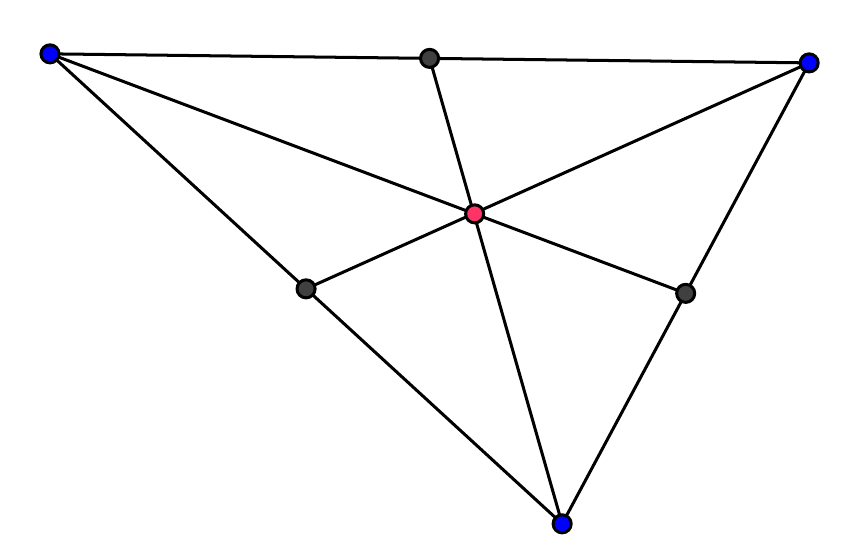}}
\end{center}
\caption{The three medians of a triangle intersect at the centroid. }
\label{trianglesyz}
\end{figure}

Pappus's Theorem, which dates from the fourth century A.D., describes
another syzygy. It is one of the inspirations of modern projective
geometry.

\begin{theorem}[Pappus] \label{theorem:Pappus} If 3 points $A,
  B, C$ lie on one line, and three points $a, b, c$ lie on another,
  then the lines $Aa$, $Bb$, $Cc$ meet the lines $aB$, $bC$, $cA$ in
  three new points and these new points are collinear, as illustrated
  in the left diagram of Figure \ref{PappusPascal}. 
\end{theorem}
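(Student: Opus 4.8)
The plan is to derive Pappus's Theorem from the geometry of plane cubics, the very circle of ideas the paper goes on to develop. I would label the six sides of the hexagon $A\,a\,B\,b\,C\,c$ inscribed in the two given lines and split them into two alternating triples. Their products define two cubic curves, $U = (Aa)(Bb)(Cc) = 0$ and $V = (aB)(bC)(cA) = 0$, each a union of three lines. By B\'ezout's theorem these cubics meet in nine points, and reading off the pairwise intersections of their linear factors shows that these nine points are exactly the six original points $A,B,C,a,b,c$ together with the three ``new'' points $P = Aa \cap bC$, $Q = Bb \cap cA$, and $R = Cc \cap aB$ whose collinearity we must establish.

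The key observation is that six of these nine points are constrained: $A,B,C$ lie on one given line $\ell_1$ and $a,b,c$ on the other $\ell_2$, so all six lie on the single degenerate conic $\ell_1 \cup \ell_2$. I would then draw the line $L$ through $P$ and $Q$ and form the degenerate cubic $(\ell_1)(\ell_2)(L)$. This cubic automatically passes through eight of the nine intersection points of $U$ and $V$: the six points on $\ell_1 \cup \ell_2$, together with $P$ and $Q$ on $L$. The heart of the argument is then a Cayley-Bacharach step: a cubic passing through eight of the nine points of a complete intersection of two cubics must pass through the ninth. Applying this, $(\ell_1)(\ell_2)(L)$ passes through $R$ as well; since $R$ lies off both $\ell_1$ and $\ell_2$ in the generic configuration, it must lie on $L$, giving the desired collinearity of $P$, $Q$, and $R$.

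The main obstacle is that this Cayley-Bacharach step is precisely the tool the paper sets out to develop, so at this stage it cannot be invoked as a black box; one must either defer the proof until that machinery is in place or replace the step by a direct pencil argument. The self-contained substitute is to study the pencil $\lambda U + \mu V$ of cubics through the nine points: if the eight points $A,B,C,a,b,c,P,Q$ impose independent conditions, then every cubic vanishing on them lies in this pencil, and since each member of the pencil already contains $R$, so does $(\ell_1)(\ell_2)(L)$. Carrying out this dimension count carefully, and checking the nondegeneracy hypotheses, namely that the nine points really are distinct and that $R$ avoids $\ell_1 \cup \ell_2$, is the genuinely delicate part. The degenerate positions where points collide can be dispatched either by a limiting argument or by a direct coordinate computation, placing $\ell_1 = \{y=0\}$ and $\ell_2 = \{x=0\}$ and verifying that the $3\times 3$ determinant formed from the homogeneous coordinates of $P$, $Q$, and $R$ vanishes identically.
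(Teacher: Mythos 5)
Your proposal is correct and follows essentially the same route as the paper, which derives Pascal's Theorem (and hence Pappus as the degenerate case where the conic is the line pair $\ell_1 \cup \ell_2$) from the $8 \Rightarrow 9$ Theorem by taking the two cubics to be the alternating triples of lines and the auxiliary cubic to be the conic together with the line through two of the three new points. Even your ``self-contained substitute'' matches the paper, which remarks after Tao's proof that any cubic through eight of the nine points must be a linear combination of the two given cubics, i.e.\ lies in the pencil.
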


Pappus's Theorem
  appears in his text, {\it Synagogue} \cite{Jones1,Jones2}, a collection of classical Greek geometry with insightful commentary. David Hilbert observed that Pappus's
  Theorem is equivalent to the claim that the multiplication of
  lengths is commutative (see e.g. Coxeter
  \cite[p. 152]{Coxeter}). Thomas Heath
  believed that Pappus's intention was to revive the
  geometry of the Hellenic period \cite[p. 355]{Heath}, but it wasn't until 1639 that the
  sixteen year-old Blaise Pascal generalized
  Pappus's theorem  \cite[Section 3.8]{CoxeterGreitzer}, replacing the
  two lines with a more general conic section. 

\begin{theorem}[Pascal] \label{theorem:Pascal} If 6 points $A,
  B, C, a, b, c$ lie on a conic section, 
  then the lines $Aa$, $Bb$, $Cc$ meet the lines $aB$, $bC$, $cA$ in
  three new points and these new points are collinear, as illustrated
  in the right diagram of Figure \ref{PappusPascal}.  
\end{theorem}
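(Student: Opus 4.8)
The plan is to prove Pascal's Theorem by the classical pencil-of-cubics argument, which is a special case of the Cayley--Bacharach phenomenon that animates the rest of the paper. I would work in the projective plane $\P^2$ over $\C$ and let $Q$ denote the irreducible conic through the six points. First I would record the hexagon explicitly: reading the vertices in the cyclic order $A,a,B,b,C,c$, the six sides are exactly the given lines $Aa$, $aB$, $Bb$, $bC$, $Cc$, $cA$, and the three ``new'' points are the intersections of opposite sides, namely $X = Aa \cap bC$, $Y = Bb \cap cA$, and $Z = Cc \cap aB$.

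Next I would manufacture two cubic curves from these lines. Let $F_1$ be the product of the linear forms defining $Aa$, $Bb$, and $Cc$, and let $F_2$ be the product of the forms defining $aB$, $bC$, and $cA$. Each is a cubic, and a quick incidence check shows that all six vertices lie on both $F_1=0$ and $F_2=0$ (each vertex sits on exactly one line from each triple), while each of $X,Y,Z$ also lies on both cubics (e.g.\ $X$ lies on the factor $Aa$ of $F_1$ and on the factor $bC$ of $F_2$). Thus $F_1$ and $F_2$ meet in the nine points consisting of the six vertices together with $X$, $Y$, $Z$.

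Now I would pick an auxiliary seventh point $R$ on $Q$, distinct from the six vertices, and choose $[\lambda:\mu]\in\P^1$ so that the cubic $G = \lambda F_1 + \mu F_2$ vanishes at $R$; since vanishing at $R$ is a single linear condition on the pencil, such a nonzero $G$ exists. Then $G$ meets $Q$ in the seven distinct points $A,B,C,a,b,c,R$. Because $2\cdot 3 = 6 < 7$, B\'ezout's Theorem forces the irreducible conic $Q$ to divide $G$, so $G = Q\cdot L$ for some line $L$. Finally, since $G$ passes through $X,Y,Z$ while these three points do not lie on $Q$, they must lie on $L$, which proves that they are collinear.

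The main obstacle is dispatching the degenerate configurations. The key separation step---that $X,Y,Z\notin Q$---is exactly where general position enters: if, say, $X\in Q$, then the chord $Aa$ would meet $Q$ in the three points $A$, $a$, $X$, which is impossible for an irreducible conic unless $X\in\{A,a\}$, and the latter forces three of the six points to be collinear. I would therefore either adopt the standing hypothesis that no three of the six points are collinear (with $Q$ nondegenerate, the degenerate pair-of-lines case being precisely Pappus's Theorem) or handle the coincidences by a continuity argument specializing from the generic case. A secondary point to check is that $F_1$ and $F_2$ are genuinely linearly independent, which holds because they share no common linear factor once the six lines are distinct, so the pencil is honest and the choice of $R$ and $G$ goes through.
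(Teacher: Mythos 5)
Your proof is correct, but it follows a genuinely different route from the paper's. Both arguments start the same way: split the six extended sides of the hexagon into the two alternating triples, obtaining cubics $F_1 = 0$ and $F_2 = 0$ that meet in the six vertices on $Q$ together with the three points $X, Y, Z$ off it. They diverge at the key step. The paper invokes the $8 \Rightarrow 9$ Theorem as a black box: it lets $L$ be the line through two of the three points off $Q$, observes that $Q \cup L$ is a cubic through eight of the nine points of $C_1 \cap C_2$, concludes that it contains the ninth, and notes that the ninth point cannot lie on $Q$, hence lies on $L$. You instead run the classical pencil argument (essentially Chasles's original proof, and in effect a direct proof of the one instance of Cayley--Bacharach that is needed): choose $G = \lambda F_1 + \mu F_2$ vanishing at an auxiliary seventh point $R$ of $Q$, use B\'ezout's Theorem and the irreducibility of $Q$ to force $G = Q \cdot L$, and read off collinearity of $X, Y, Z$ from the residual line $L$. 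What each buys: the paper's route fits its larger program --- the $8 \Rightarrow 9$ Theorem is proved in full (after Tao) and then reused for the Braikenridge--Maclaurin Theorem and its generalizations, so Pascal's Theorem comes nearly for free once that machinery is in place; your route is self-contained and more elementary, needing only B\'ezout rather than the full $8 \Rightarrow 9$ statement. You are also more explicit about degeneracies than the paper: your verification that $X, Y, Z \notin Q$ (a chord of an irreducible conic meets it in at most two points) and your remarks on the linear independence of $F_1, F_2$ and on the degenerate pair-of-lines case (which is Pappus) address hypotheses --- notably the distinctness of the nine intersection points --- that the paper's application of the $8 \Rightarrow 9$ Theorem tacitly assumes.
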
 

\begin{figure}[h!t]
 \hspace{0.05\textwidth}
\includegraphics[width=0.40\textwidth]{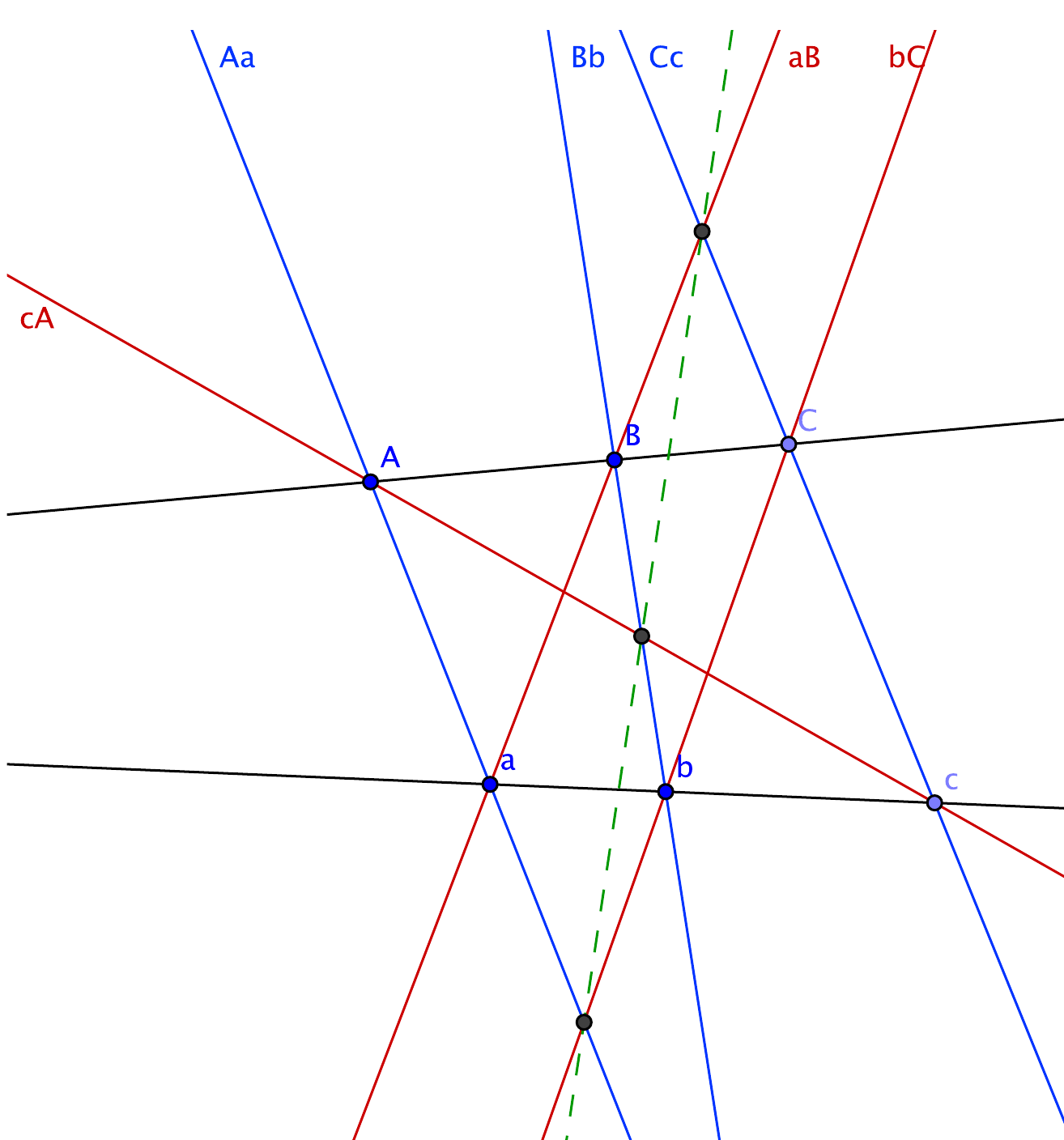} \hspace{0.05\textwidth}
\scalebox{1.05}{\includegraphics[width=0.40\textwidth]{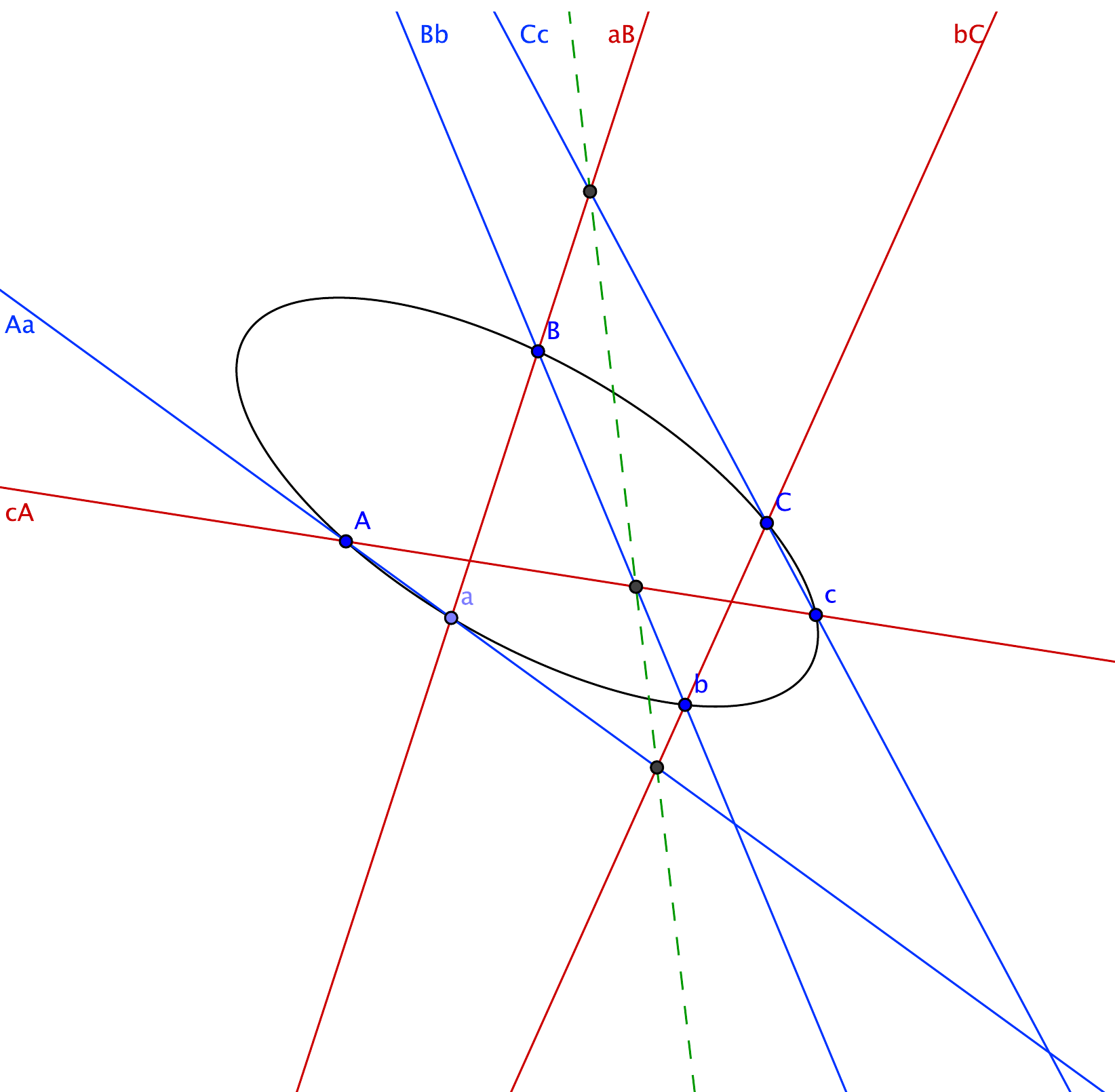}}
\caption{Illustrations of Pappus's Theorem (left) and Pascal's Theorem
(right). }
\label{PappusPascal}
\end{figure}

Pascal's theorem is sometimes formulated as the Mystic
Hexagon Theorem: if a hexagon is inscribed in a conic then the 3
points lying on lines extending from pairs of opposite edges of the
hexagon are collinear, as in Figure \ref{MHT}.

\begin{figure}[h!t]
\begin{center}
\scalebox{0.8}{\includegraphics[width=0.75\textwidth]{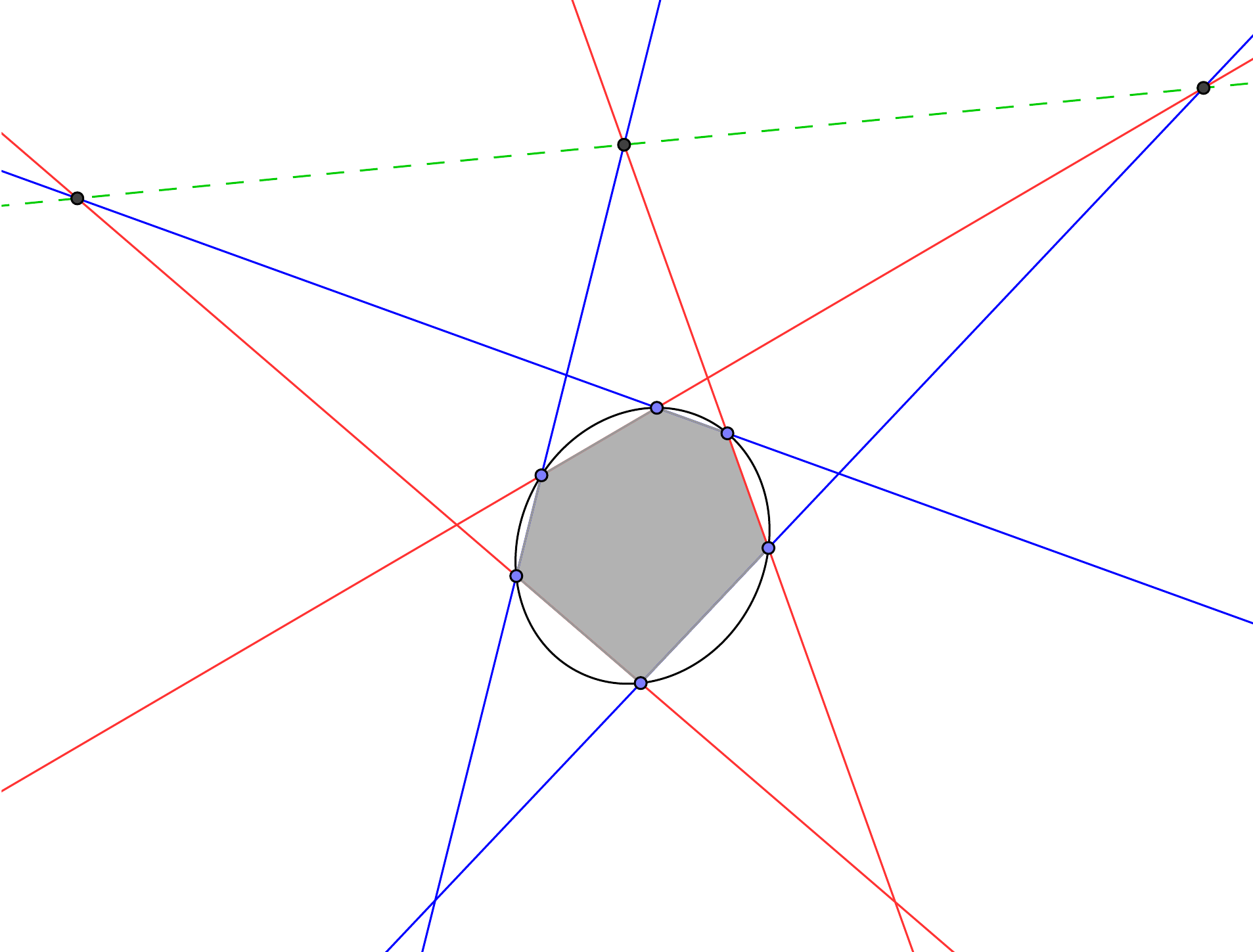}}
\end{center}
\caption{The Mystic Hexagon Theorem. }
\label{MHT}
\end{figure}

It is not clear why the theorem deserves the
adjective mystic. Perhaps it refers to the case where a regular
hexagon is inscribed in a circle. In that case, the three pairs of opposite
edges are parallel and the theorem then predicts that the parallel
lines should meet (at infinity), and all three points of intersection should be
collinear. Thus, a full understanding of Pascal's theorem requires
knowledge of the projective plane, a geometric object described in
some detail in Section \ref{Section:Projective Plane}. 
 
Pascal's Theorem has an interesting converse, sometimes called the
Braikenridge--Maclaurin theorem after the two British mathematicians
William Braikenridge and Colin Maclaurin.

\begin{theorem}[Braikenridge--Maclaurin] If three lines meet three
other lines in nine points and if three of these points lie on a line
then the remaining six points lie on a conic. \label{Braikenridge-MaclaurinTheorem}
\end{theorem}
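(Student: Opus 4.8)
The plan is to recognize the whole figure as the complete intersection of two cubic curves and then invoke the Cayley--Bacharach Theorem for plane cubics. Writing $\ell_1,\ell_2,\ell_3$ for the first three lines and $m_1,m_2,m_3$ for the other three, I would form the degenerate cubics $F=\ell_1\ell_2\ell_3$ and $G=m_1m_2m_3$, the products of the corresponding linear forms. The nine points in the statement are exactly the intersections $P_{ij}=\ell_i\cap m_j$, so they constitute the nine points of $\{F=0\}\cap\{G=0\}$ in which the two cubics meet.

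Next I would pin down the collinearity. In the nondegenerate situation the three collinear points cannot share any one of the six lines: if two of them lay on a common $\ell_i$, the line $L$ through them would meet $\ell_i$ twice and hence equal $\ell_i$, collapsing the configuration. Thus the collinear triple is a transversal of the grid --- one point on each $\ell_i$ and one on each $m_j$, say $P_{12},P_{23},P_{31}$ --- exactly matching Pascal's configuration; call the line through them $L$, and let the remaining six be the complementary points. The heart of the argument is a clever choice of auxiliary cubic: let $Q$ be a conic through five of the six remaining points. Such a $Q$ exists because five points impose at most five linear conditions on the five-dimensional space of conics, so the conics through them form a linear system of dimension at least zero. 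The product $L\cdot Q$ is then a cubic vanishing at the three points of $L$ together with five of the remaining points --- eight of the nine points $P_{ij}$ in all.

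Now I would apply the Cayley--Bacharach Theorem: since $L\cdot Q$ is a cubic passing through eight of the nine points cut out by the cubics $F$ and $G$, it must also pass through the ninth. That ninth point is the last remaining vertex, and it cannot lie on $L$, by the same no-two-share-a-line reasoning as above. Therefore it lies on $Q$, so all six remaining points lie on the single conic $Q$, as claimed.

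The main obstacle is the Cayley--Bacharach step, which is where all the force of the theorem resides; everything else is bookkeeping. Equivalently, the key fact can be phrased as the statement that nine points forming a complete intersection of two cubics fail to impose independent conditions on cubics, imposing only eight rather than nine. The other point demanding care is the treatment of degenerate configurations --- guaranteeing the conic through five points and verifying that the ninth point genuinely lies off $L$ --- but in the generic situation envisaged by the theorem these cause no trouble.
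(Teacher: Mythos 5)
Your proposal is correct and follows essentially the same route as the paper: both treat the two triples of lines as degenerate cubics, form the auxiliary cubic $L \cup Q$ with $Q$ a conic through five of the six remaining points, apply the $8 \Rightarrow 9$ (Cayley--Bacharach) Theorem to force the ninth point onto $L \cup Q$, and conclude it lies on $Q$ since it cannot lie on $L$. The only cosmetic difference is that you rule out the ninth point lying on $L$ by a pigeonhole argument on the grid of lines, where the paper notes that $L$ would then meet each of the original cubics in more than three points, contradicting B\'ezout's Theorem.
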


Braikenridge and Maclaurin seem to have arrived at the result independently, though
they knew each other and their correspondence includes a dispute over
priority. 

In 1848 the astronomer and mathematician August Ferdinand M\"obius
generalized the Braikenridge--Maclaurin Theorem. Suppose a polygon
with $4n+2$ sides is inscribed in a nondegenerate conic and we
determine $2n+1$ points by extending opposite edges until they
meet. If $2n$ of these $2n+1$ points of intersection lie on a line
then the last point also lies on the line. M\"obius's had developed a
system of coordinates for projective figures, but surprisingly his
proof relies on solid geometry. In Section \ref{Section:CBC} we prove
an extension of M\"obius's result, using the properties of projective
plane curves -- in particular, the Cayley-Bacharach Theorem. 

The Cayley--Bacharach Theorem is a wonderful result in projective
geometry. In its most basic form (sometimes called the 8 implies 9
Theorem) it says that if two cubic curves meet in 9 points then any
cubic through 8 of the nine points must also go through the ninth
point.  For the history and many equivalent versions of the
Cayley-Bacharach Theorem, see Eisenbud, Green and Harris's elegant
paper \cite{EGH}.  A strong version of the Cayley--Bacharach Theorem,
described in Section \ref{Section:CBC} , can be used to establish
another generalization of the Braikenridge--Maclaurin Theorem. 
The following existence theorem is well-known (see Kirwan's book on complex algebraic curves
\cite[Theorem 3.14]{Kirwan}) but we also claim a uniqueness result.  The
statement of Theorem \ref{construction} is inspired by the study of
hyperplane arrangements -- in this case, by collections of colored
lines in the plane. 

\begin{theorem} \label{construction} Suppose that $2k$ lines in the
  projective plane meet another line in $k$ triple points. Color the
  lines so that the line containing all the triple points is green and
  each of the $k$ collinear triple points has a red and a blue line
  passing through it.  Then there is a unique curve of degree $k-1$
  passing through the points where the red lines meet the blue lines
  off the green line.
\end{theorem}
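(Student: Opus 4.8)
The plan is to work with the three defining forms at once. Let $G$ be the linear form cutting out the green line, let $R = R_1\cdots R_k$ be the degree-$k$ form whose factors $R_i$ cut out the red lines, and let $B = B_1\cdots B_k$ be the analogous form for the blue lines. Write $P_i = r_i\cap b_i$ for the $k$ triple points on the green line and $Q_{ij}=r_i\cap b_j$ (for $i\neq j$) for the $k(k-1)$ remaining intersection points. Then the set $\Gamma = \{P_i\}\cup\{Q_{ij}\}$ is exactly $V(R)\cap V(B)$, and provided the configuration is generic enough that these $k^2$ points are distinct and no red line coincides with a blue line or with the green line, $\Gamma$ is a reduced complete intersection of two curves of degree $k$. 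The structural fact I will exploit is that the degree-$k$ forms vanishing on $\Gamma$ are precisely the pencil $\langle R, B\rangle$.

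To produce the curve I would first restrict to the green line. Each red line meets $G$ only at its triple point $P_i$, so $R|_G$ is, up to a scalar, the product of the $k$ linear forms on $G\cong\P^1$ vanishing at $P_1,\dots,P_k$; the same holds for $B|_G$. Hence $R|_G$ and $B|_G$ are proportional binary forms of degree $k$, and I can choose constants $\alpha,\beta$, not both zero, with $\alpha R|_G+\beta B|_G\equiv 0$. Then $G$ divides $\alpha R+\beta B$, so $\alpha R+\beta B = G\cdot C$ for a form $C$ of degree $k-1$. Since $\alpha R+\beta B$ vanishes at every point of $\Gamma$ while $G$ is nonzero at each off-green point $Q_{ij}$, the cofactor $C$ vanishes at all $k(k-1)$ points $Q_{ij}$. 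This recovers the known existence statement.

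Uniqueness is the new content, and here I would lean on the complete-intersection structure. Let $C'$ be any curve of degree $k-1$ through the points $Q_{ij}$. Then $G\,C'$ is a degree-$k$ form vanishing on all of $\Gamma$ (at the $Q_{ij}$ because $C'$ does, and at the $P_i$ because $G$ does), so by the structural fact above $G\,C' = \alpha' R+\beta' B$ for some scalars. Divisibility by $G$ forces $\alpha' R|_G+\beta' B|_G\equiv 0$, a single linear condition on $(\alpha',\beta')$; its solution is unique up to scale, so $G\,C'$ is a scalar multiple of $G\,C$. Cancelling $G$ gives $C'\propto C$, proving that the curve of degree $k-1$ is unique.

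The substantive obstacle is the structural fact that $\langle R,B\rangle$ exhausts the degree-$k$ forms vanishing on $\Gamma$, equivalently $\dim (I_{\Gamma})_k = 2$; this is Max Noether's $AF+BG$ theorem for the reduced complete intersection $\Gamma$, and it is here that the strong Cayley--Bacharach theorem enters. Before invoking it one must pin down the genericity that makes $\Gamma$ reduced: the $k$ triple points distinct, the $k^2$ points $r_i\cap b_j$ distinct, and no red line equal to a blue line or to the green line. I expect the most transparent route to run through the residuation form of Cayley--Bacharach with $s=2k-3$, pairing degree $m=k-2$ against the complementary degree $s-m=k-1$: a one-line binary-form count on the green line controls how the collinear set $\{P_i\}$ behaves in degree $k-2$, and residuation then forces the space of degree-$(k-1)$ curves through $\{Q_{ij}\}$ to be one-dimensional. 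Matching these complementary degrees correctly is the delicate bookkeeping step.
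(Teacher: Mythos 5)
Your proposal is correct (in the reduced, generic setting you flag, which is also the regime of the paper's proof), but it takes a genuinely different route. The paper proves existence and uniqueness in one stroke by a single application of the strong Cayley--Bacharach theorem with $\Gamma''$ taken to be the $k$ collinear triple points: since no degree-$(k-1)$ form vanishes on all of $\Gamma$ (B\'ezout forces every blue line to divide such a form), the space of degree-$(k-1)$ forms through $\Gamma' = \{Q_{ij}\}$ has dimension equal to the failure of $k$ collinear points to impose independent conditions in degree $k-2$, which is exactly $1$. You instead split the statement: existence via the pencil trick ($R|_G \propto B|_G$, so some $\alpha R + \beta B$ is divisible by $G$ with cofactor the desired curve --- this is precisely Kirwan's existence argument, which the paper cites), and uniqueness via the structural fact $(I_\Gamma)_k = \langle R, B\rangle$ plus the clean restriction-to-$G$ computation pinning $(\alpha',\beta')$ up to scale. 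Two remarks. First, the structural fact needs no appeal to Noether's $AF+BG$: it follows from the paper's own Theorem \ref{CB} by taking $\Gamma' = \emptyset$ and $d = k \leq s = 2k-3$ (so $k \geq 3$), which gives $\dim S_k - \dim (I_\Gamma)_k = k^2 - \binom{k-1}{2}$, and since $\binom{k+2}{2} + \binom{k-1}{2} = k^2 + 2$ this yields $\dim (I_\Gamma)_k = 2$. Second, your closing paragraph --- pairing degree $k-2$ against degree $k-1$ --- is not a route to the structural fact; it is verbatim the paper's proof, and your pencil/complete-intersection argument makes it unnecessary. The trade-off between the two approaches: yours is more constructive and elementary, while the paper's single Cayley--Bacharach application handles existence and uniqueness simultaneously and extends more gracefully to degenerate configurations (the multiplicity interpretation the paper discusses after its proof), where your passage from ``$C'$ vanishes at the points $Q_{ij}$'' to ``$GC' \in \langle R, B\rangle$'' would require imposing intersection multiplicities rather than mere point conditions.
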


When the red and blue lines have generic slopes, they meet in $k^2-k$
points off the green line. Since $\binom{k+1}{2}-1 = \frac{k^2+k-2}{2}$ points in general position determine a
unique curve of degree $k-1$ passing through the points,  it is
quite remarkable that the curve passes through all $k^2-k$
points of intersection off the green line. The Braikenridge--Maclaurin Theorem is just the instance $k=3$ of
Theorem \ref{construction}. The case where $k=4$ is illustrated in Figure
\ref{cubicexample}.  

\begin{figure}[h!t]
\begin{center}
\includegraphics[width=0.75\textwidth]{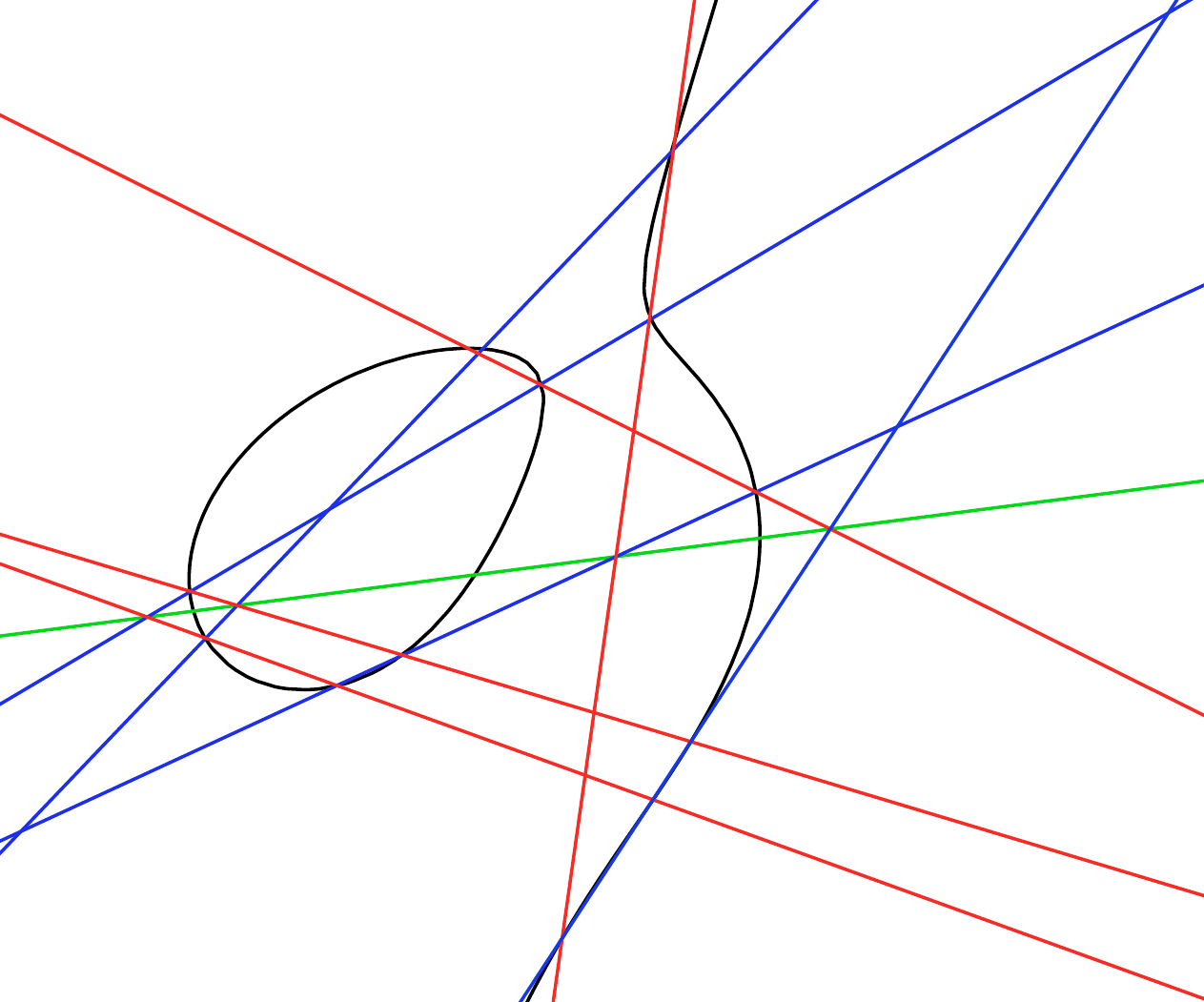}
\end{center}
\caption{An illustration of Theorem \ref{construction} when $k=4$. }
\label{cubicexample}
\end{figure}

We use the Cayley-Bacharach Theorem to prove Theorem
\ref{construction} in Section \ref{Section:CBC}. In Section
\ref{section:construction} we consider the kinds of curves produced by
the construction in Theorem \ref{construction}. For instance, we use
the group law on an elliptic curve to give a constructive argument
that, in a way that will be made precise, almost every degree-3 curve
arises in this manner. More generally, almost every degree-4 and
degree-5 curve arises in this manner. A simple dimension argument is
given to show that most curves of degree 6 or higher do not arise in
this manner. The proofs for degree 4 and 5 involve secant
varieties -- special geometric objects that have been quite popular
recently because of their applications to algorithmic complexity,
algebraic statistics, mathematical biology and quantum computing (see,
for example, Landsberg \cite{LandsbergBAMS, LandsbergPvsNP} ).

The last section contains some suggestions for further reading. As
well, Sections \ref{Section:Projective Plane} and \ref{Section:Fun}
contain amusing exercises that expand on the topic of the paper.

A paper generalizing a classical result in geometry cannot reference
all the relevant literature. One recent paper by Katz \cite{Katz}
is closely related to this work. His Mystic 2$d$-Gram \cite[Theorem
3.3]{Katz} gives a nice generalization of Pascal's Theorem; see
Exercise \ref{ex:fun}.\ref{ex:katz}.  He
also raises an interesting constructibility question: which curves can
be described as the unique curve passing through the $d^2-2d$ points
of intersection of $d$ red lines and $d$ blue lines that lie off a
conic through $2d$ intersection points?

\noindent {\em Acknowledgements: The author is grateful for
  conversations with my colleagues Mark Kidwell, Amy Ksir, Mark
  Meyerson, Thomas Paul, and Max Wakefield, and with my friend Keith
  Pardue. My college algebra professor Tony Geramita's work and
  conduct has been an inspiration to me. I have much to thank him for,
  but here I'll just note that he pointed me in the direction of some
  key ideas, including Terracini's lemma.  Many computations and
  insights were made possible using the excellent software packages
  Macaulay2, GeoGebra, Sage and Maple.  }

\end{section}
\begin{section}{Projective Geometry}
\label{Section:Projective Plane}

The general statement of Pascal's Theorem suggests that parallel lines
should meet in a point and that as we vary the pairs of parallel lines
the collection of such intersection points should lie on a line. This
is manifestly false in the usual Cartesian plane, but the plane can be
augmented by adding {\em points at infinity}, after which Pascal's Theorem
holds. The resulting projective plane $\P^2$ is a fascinating object
with many nice properties. 

One powerful model of the projective plane identifies points in $\P^2$
with lines through the origin in 3-dimensional space. To see how this relates to
our usual plane, consider the plane $z=1$ in 3-dimensional space as a
model for $\R^2$ and note that most lines through the origin meet this
plane. The line passing through $(x,y,1)$ is identified with the point
$(x,y) \in \R^2$. But what about the lines that don't meet this plane?
These are parallel to $z=1$ and pass through $(0,0,0)$ so they are
lines in the $xy$-plane. Each of these lines can be viewed as a
different point at infinity since they've been attached to our copy of
$\R^2$. 

In 1827 M\"obius developed a useful system of coordinates for points
in projective space \cite{M1827}, later extended by Grassmann. If we consider the punctured 3-space $\R^3
\setminus \{(0,0,0)\}$ and the equivalence relation 
$$ (x,y,z) \sim (\lambda x, \lambda y, \lambda z) \Leftrightarrow
\lambda \neq 0, $$
then each equivalence class corresponds to a line in $\R^3$ through the
origin. We denote the equivalence class of points on the line through
$(x,y,z)$ by $[x:y:z]$. This is a sensible notation since the
ratios between the coordinates determine the direction of the
line. Returning to our earlier model of $\P^2$, the points with $z
\neq 0$ correspond to points in our usual copy of $\R^2$, while the
points with $z=0$ correspond to points at infinity. 

If points in $\P^2$ correspond to lines through the origin, then what
do lines in $\P^2$ look like? Considering a line in $\R^2$ as
sitting in the plane $z=1$ we see that the points making up this line correspond
to lines through the origin that, together, form a plane. Any line in $\R^2$ can be
described by an equation of the form $ax+by+c = 0$; the reader should
check that this determines the plane $ax+by+cz = 0$. Thus, lines in
$\P^2$ correspond to dimension-2 subspaces of $\R^3$. In particular,
the line in $\P^2$ whose equation is $z=0$ is the line at infinity. 

We can also add points at infinity to $\R^n$ to create $n$-dimensional
projective space $\P^n$. Again, points in $\P^n$ can be identified with
1-dimensional subspaces of $\R^{n+1}$ and each point is denoted using
homogeneous coordinates $[x_0: x_1: \ldots : x_n]$. Similarly, we can
construct the complex projective spaces $\P^n_\mathbb{C}$, the points
of which can be identified with 1-dimensional complex subspaces of $\C^{n+1}$.

\begin{exercise} If this is the first time you've met projective
  space, you might try these, increasingly complicated, exercises. \label{ex:projective}
\begin{enumerate}
\item Show that if $ax+by+cz=0$ and $dx+ey+fz=0$ are two lines in
  $\P^2$ then they meet in a point $P = [g:h:i]$ given by the cross product, $$\langle g,h,i
  \rangle = \langle a,b,c \rangle \times \langle d,e,f \rangle.$$
\item Show that the line $ax+by+cz = 0$ in $\P^2$ consists of all the
  points of the form $[x:y:1]$ such that $ax+by+c=0$, together with a
  single point at infinity (the point $[b:-a:0]$). We say
  that the line $ax+by+cz=0$ is the {\em projectivization} of the line
  $ax+by+c=0$. 
\item Show that the projectivizations of two parallel lines
  $ax+by+c=0$ and $ax+by+d=0$ in $\R^2$ meet at a point at infinity. 
\item \label{projectivization} The projectivization of the hyperbola  $xy=1$ in $\R^2$ is the set of
  points in $\P^2$ that satisfy $xy - z^2 = 0$. Show that whether a point $[x:y:z]$ lies on the projectivization
  of the hyperbola or not is a well-defined property (i.e. the answer doesn't depend on
  which representative of the equivalence class $[x:y:z]$ we
  use). Where does the projectivization meet the line at infinity?
\item \label{S2} (a) By picking representatives of each equivalence class carefully,
  show that $\P^2$ can be put into 1-1 correspondence with the points
  on a sphere $S^2 \subset \R^3$, as long as we identify antipodal
  points, $(x,y,z) \sim (-x,-y,-z)$. \\
(b) Considering only the top half of the sphere, show that the points
in $P^2$ can be identified with points in a disk where antipodal points on the
boundary circle are identified.  \\
(c) Considering a thin band about the equator of the sphere from part (a),
show that $P^2$ can be constructed by sewing a M\"obius band onto the
boundary of a disk. How many times does the band twist around as we go
along the boundary of the disk? \\
(d) {\em Blowing up} is a common process in algebraic geometry. When
we blow up a surface at a point we replace the point with a
projectivization of its tangent space  (that is, the
space of lines through the base point in the tangent space). Show that
if we blow up a point on the sphere $S^2 \subset \R^3$ we get
$\P^2$. Show that if we blow up a second point we get a Klein bottle,
the surface obtained by sewing two M\"obius bands together along their
edges.   
\item (a) Viewing $\P^2$ as the set of lines in $\R^3$ through the
  origin, check that the map $\iota: \R^3 \setminus \{(0,0,0)\}
  \rightarrow \R^4$ given by
  $$\iota(x,y,z) =
  \left(\frac{x^2}{x^2+y^2+z^2},\frac{xy}{x^2+y^2+z^2},\frac{z^2}{x^2+y^2+z^2},\frac{(x+y)z}{x^2+y^2+z^2}
    \right)$$
  induces a well-defined embedding of $\P^2$ into $\R^4$. \\
  (b) Samuelson \cite{HS} gives an elegant argument to show that
  $\P^2$ cannot be embedded in $\R^3$. Maehara \cite{Why}, exploiting
  work of Conway and Gordon \cite{CG} and Sachs \cite{Sachs}, gives another simple
  argument for this fact based on the observation that any embedding of the complete graph $K_6$ in $\R^3$
  contains a pair of linked triangles. Complete the argument by drawing $K_6$ on $\P^2$ in such a way that no triangles are homotopically linked. \\
  (c) Amiya Mukherjee \cite{AM} showed that the complex projective
  plane $\P_\mathbb{C}^2$ can be smoothly embedded in $\R^7$. Research
  Question: Can $\P_\mathbb{C}^2$ be holomorphicly embedded in $\C^3$?
  Can $\P_\mathbb{C}^2$ be smoothly embedded in $\R^n$ with $n<7$?

\end{enumerate}
\end{exercise}

\begin{subsection}{The Power of Projective Space}

Projective space $\P^2$ enjoys many nice properties that Euclidean space
$\mathbb{R}^2$ lacks. Many theorems are much easier to state in projective space than
in Euclidean space. For instance, in Euclidean space any two lines
meet in either one point or in no points (in the case where the two
lines are parallel). By adding points at infinity to Euclidean space,
we've ensured that {\em any} two distinct lines meet in a point. This is just
the first of a whole sequence of results encapsulated in B\'ezout's
Theorem. 

Each curve $C$ in the projective plane can be described as the zero
set of a homogeneous polynomial $F(x,y,z)$: 
$$ C = \{ [x:y:z] \; : \; F(x,y,z) = 0 \}. $$
The polynomial needs to be homogeneous (all terms in the polynomial
have the same degree) in order for the curve to be well-defined (see
Exercise \ref{ex:projective}.\ref{projectivization}). It is
traditional to call degree-d homogeneous polynomials {\em degree-d
  forms}. The curve $C$ is said to be a degree-$d$ curve when
$F(x,y,z)$ is a degree-$d$ polynomial. We say that $C$ is an
irreducible curve when $F(x,y,z)$ is an irreducible polynomial.  When
$F(x,y,z)$ factors then the set $C$ is actually the union of several
component curves each determined by the vanishing of one of the
irreducible factors of $F(x,y,z)$. If $F(x,y,z) = G(x,y,z)H(x,y,z)$
then removing the component $G(x,y,z)=0$ leaves the {\em residual
  curve} $H(x,y,z)=0$.
 
\begin{theorem}[B\'ezout's Theorem] If $C_1$ and $C_2$ are curves of
  degrees $d_1$ and $d_2$ in the complex projective plane $\P^2_\C$
  sharing no common components then they meet in $d_1d_2$ points, counted appropriately. 
\end{theorem}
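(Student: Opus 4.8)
The plan is to prove Bézout's Theorem by the classical method of \emph{resultants}, which keeps the entire argument inside the polynomial algebra already in play and makes the phrase ``counted appropriately'' precise. Write $F(x,y,z)$ and $G(x,y,z)$ for the defining forms of $C_1$ and $C_2$, of degrees $d_1$ and $d_2$. First I would normalize coordinates: since the no-common-component hypothesis forces $C_1 \cap C_2$ to be finite, a projective change of coordinates lets me assume the point $[0:0:1]$ lies on neither curve and that no two intersection points are collinear with it. Geometrically I am choosing a generic center of projection $\pi \colon \P^2 \setminus \{[0:0:1]\} \to \P^1$, $[x:y:z] \mapsto [x:y]$; such a choice exists because $\C$ is infinite and each condition to be avoided is a proper closed one.

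Next, regard $F$ and $G$ as polynomials in the single variable $z$ with coefficients in $\C[x,y]$. Because $[0:0:1]$ is off both curves, the leading coefficients (of $z^{d_1}$ in $F$ and of $z^{d_2}$ in $G$) are nonzero constants, so neither polynomial drops degree in $z$. The central object is the resultant $R(x,y) = \mathrm{Res}_z(F,G)$, for which two standard facts drive the proof: $R$ is a homogeneous form in $x,y$ of degree exactly $d_1 d_2$, and $R(x_0,y_0) = 0$ if and only if $F(x_0,y_0,z)$ and $G(x_0,y_0,z)$ have a common root $z_0$ --- that is, if and only if some intersection point of $C_1$ and $C_2$ lies on the fiber $\pi^{-1}([x_0:y_0])$. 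Since the curves share no component, $F$ and $G$ have no common factor, so $R$ is not the zero form.

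Now I invoke the Fundamental Theorem of Algebra in homogeneous form: a nonzero binary form of degree $d_1 d_2$ over $\C$ factors into $d_1 d_2$ linear factors, counted with multiplicity. By the normalization each root $[x_0:y_0]$ of $R$ pulls back to a single intersection point $P = [x_0:y_0:z_0]$, so the number of intersection points, weighted by the multiplicity of the corresponding linear factor in $R$, is exactly $d_1 d_2$. Declaring this factor multiplicity to be the intersection multiplicity of $C_1$ and $C_2$ at $P$ --- equivalently $\dim_{\C} \mathcal{O}_{\P^2, P}/(F,G)$ --- makes ``counted appropriately'' precise and completes the count.

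The real obstacle is not the bookkeeping but reconciling the two notions of multiplicity: one must show that the order of vanishing of $R$ at a projected point agrees with the intrinsic local intersection number $\dim_{\C} \mathcal{O}_{\P^2, P}/(F,G)$, and that the generic projection neither collapses several intersection points onto one fiber nor inflates a multiplicity. That compatibility is where the genuine work sits; the degree of $R$ and the description of its vanishing locus are comparatively routine. A cleaner but more machinery-laden alternative would avoid resultants altogether: in the Chow ring of $\P^2$ every degree-$d$ curve is rationally equivalent to a union of $d$ lines and two distinct lines meet in one point, whence $[C_1]\cdot[C_2] = d_1 d_2$ immediately --- but this presupposes the intersection theory that the resultant argument instead builds by hand.
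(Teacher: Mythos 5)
The first thing to say is that the paper does not actually prove this theorem: it states B\'ezout's Theorem, gives an informal account of intersection multiplicity (counting the points of $C_1(t) \cap C_2(t)$ that converge to $P$ in a well-chosen deforming family), and explicitly defers the proof to Kirwan's book. So there is no internal proof to compare against; what you have written is, in outline, precisely the classical resultant argument found in the cited reference. Your skeleton is correct as far as it goes: moving the center $[0:0:1]$ off both curves makes the leading coefficients in $z$ nonzero constants, so $R(x,y) = \mathrm{Res}_z(F,G)$ is homogeneous of degree exactly $d_1 d_2$; $R \not\equiv 0$ because $F$ and $G$ share no factor; binary forms over $\C$ split into $d_1 d_2$ linear factors; and the condition that no two intersection points be collinear with the center makes roots of $R$ correspond bijectively to intersection points.

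However, as a proof the text is incomplete at exactly the point you flag yourself, and flagging a gap does not close it. You offer two readings of ``counted appropriately'': multiplicity as the order of vanishing of $R$ at the projected point, or as $\dim_{\C} \mathcal{O}_{\P^2,P}/(F,G)$, and you declare them equivalent without argument. If you take the first as the definition, you owe a proof that it is independent of the choice of projection center, since otherwise the ``count'' depends on your normalization; if the second, you owe the local computation identifying the vanishing order of the resultant with that colength. Either way this is the substantive content of the theorem (it is essentially what occupies Kirwan's treatment, where the genericity conditions are used to show the definition is well posed), and without it you have proved the count only for an ad hoc, projection-dependent notion of multiplicity. One smaller point of logical hygiene: you justify finiteness of $C_1 \cap C_2$ from the no-common-component hypothesis \emph{before} introducing the resultant, but the standard proof of that finiteness is itself via the resultant (infinitely many common points force $R \equiv 0$ for every projection, hence a common factor), so you should reorder the steps to avoid circularity. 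With the multiplicity comparison supplied, your argument would be complete, and indeed more self-contained than the paper, whose deformation-theoretic description of multiplicity would in turn need to be reconciled with your algebraic one.
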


B\'ezout's Theorem requires that we work in {\em complex} projective
space: in $\P^2_\R$ two curves may not meet at all. For instance, the
line $y-2z=0$ misses the circle $x^2+y^2-z^2=0$ in $\P^2_\R$; the points
of intersection have complex coordinates. 

To say what it means to count appropriately requires a discussion of
intersection multiplicity. This can be defined in terms of the length
of certain modules \cite{Fulton}, but an intuitive
description will be sufficient for our purposes. When two curves meet
transversally at a point $P$ (there is no containment relation between
their tangent spaces) then $P$ counts as 1 point in B\'ezout's
Theorem. If the curves are tangent at $P$ or if
one curve has several branches passing through $P$ then
$P$ counts as multiple points. One way to determine how much the
point $P$ should count is to look at well-chosen families of curves $C_1(t)$ and
$C_2(t)$ so that $C_1(0)=C_1$ and $C_2(0)=C_2$ and to count how many
points in $C_1(t) \cap C_2(t)$ approach $P$ as $t$ goes to $0$. For
instance, the line $y=0$ meets the parabola $yz=x^2$ in one point
$P = [0:0:1]$. Letting $C_1(t)$ be the family of curves $y - t^2z = 0$
and letting $C_2(t)$ be the family consisting only of the parabola, we
find that $C_1(t) \cap C_2(t) = \{ [t:t^2:1], [-t:t^2:1]\}$ and so two
points converge to $P$ as $t$ goes to 0. In this case, $P$ counts with
multiplicity two. The reader interesting in testing their
understanding could check that the two concentric circles
$x^2+y^2-z^2=0$ and $x^2+y^2-4z^2$ meet in two points, each of
multiplicity two. More details can be found in Fulton's lecture notes
\cite[Chapter 1]{FultonCBMS}. 

It is traditional to call this result B\'ezout's Theorem because it
appeared in a widely-circulated and highly-praised book\footnote{Both the MathSciNet and
  Zentralblatt reviews of the English translation \cite{Bezout} are
  entertaining and well-worth reading. The assessment in the
  MathSciNet review is atypically colorful: ``This is not a book to be
  taken to the office, but to be left at home, and to be read on
  weekends, as a romance'', while the review in Zentralblatt Math
  calls it ``an immortal evergreen of astonishing actual relevance''. }. Indeed, in his
position as Examiner of the Guards of the Navy in France, \'Etienne
B\'ezout was responsible for creating new textbooks for teaching mathematics to
the students at the Naval Academy. However, Issac Newton
proved the result over 80 years before B\'ezout's book appeared!
Kirwan \cite{Kirwan} gives a nice proof of B\'ezout's Theorem. 

Higher projective spaces arise naturally when considering {\em moduli
  spaces} of curves in the projective plane. For instance, consider a
degree-$2$ curve $C$ given by the formula 
\begin{equation} \label{conic} a_0 x^2 + a_1 xy + a_2 xz + a_3 y^2 +
  a_4 yz + a_5 z^2 = 0. \end{equation} Multiplying the formula by a
nonzero constant gives the same curve, so the curve $C$ can be
identified with the point $[a_0:a_1:a_2:a_3:a_4:a_5]$ in $\P^5$. More
generally, letting $S=\C[x,y,z] = \oplus_{d \geq 0} S_d$ be the
polynomial ring in three variables, the degree-$d$ curves in $\P^2$
are identified with points in the projective space $\P(S_d)$, where we
identify polynomials if they are nonzero scalar multiples of one
another. A basis of the vector space $S_d$ is given by the
$D=\binom{d+2}{2}$ monomials of degree $d$ in three variables, so the
degree-$d$ curves in $\P^2$ are identified with points in the
projective space $\P(S_d) \cong \P^{D-1}$. Returning to the case of
degree-2 curves in $\P^2$, if we require $C$ to pass
through a given point, then the coefficients $a_0, \ldots,
a_5$ of $C$ must satisfy the linear equation produced by substituting
the coordinates of the point into (\ref{conic}). Now if we require $C$
to pass through $5$ points in $\P^2$ the coefficients must satisfy a
homogeneous system of 5 linear equations. If the points are in general
position (so that the resulting system has full rank), then the system
has a one-dimensional solution space and so there is just one curve
passing through all 5 points. In general, we expect a unique curve of
degree $d$ to pass through $D-1$ points in general position and we
expect no curves of degree $d$ to pass through $D$ points in general
position.

\end{subsection}
\end{section}
\begin{section}{A Generalization of the Braikenridge--Maclaurin Theorem}
\label{Section:CBC}

The following Theorem is a version of the Cayley-Bacharach Theorem
that was first proven by Michael Chasles. He used it to prove Pascal's
Mystic Hexagon Theorem, Theorem \ref{theorem:Pascal}. Because of its
content, the theorem is often called the $8 \Rightarrow 9$ Theorem.

\begin{theorem}[{$8 \Rightarrow 9$} Theorem] Let $C_1$ and $C_2$ be two plane cubic curves meeting
  in $9$ distinct points. Then any other cubic passing through any
  $8$ of the nine points must pass through the ninth point too. 
\end{theorem}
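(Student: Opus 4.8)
The plan is to translate the geometric statement into linear algebra on the $10$-dimensional vector space $S_3$ of cubic forms and to reduce everything to one assertion: that the eight chosen points impose independent conditions on cubics. Recall from Section~\ref{Section:Projective Plane} that requiring a curve to pass through a fixed point imposes a single linear condition on the coefficients of its defining form, so the cubics vanishing at a given set of $m$ points form a linear subspace of $S_3$ of dimension at least $10-m$. Write $C_1 = \{F_1 = 0\}$ and $C_2 = \{F_2 = 0\}$ for the two given cubics and let $p_1,\dots,p_9$ be the nine points, with $p_9$ the point we wish to recover. Since $C_1$ and $C_2$ meet in only finitely many points, B\'ezout's Theorem forces them to share no common component, so $F_1$ and $F_2$ are linearly independent and span a pencil $\{\lambda F_1 + \mu F_2\}$ of cubics, every member of which vanishes at all nine points.

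First I would isolate the key reduction. Let $V_8 \subseteq S_3$ be the space of cubics vanishing at $p_1,\dots,p_8$ and $V_9 \subseteq V_8$ the space vanishing at all nine points. The pencil lies inside $V_9$, so $\dim V_9 \geq 2$, and in general $\dim V_8 \geq 10 - 8 = 2$. If I can show that the eight points impose independent conditions, that is $\dim V_8 = 2$, then from $V_9 \subseteq V_8$ we get $\dim V_9 \leq 2$, and combined with $\dim V_9 \geq 2$ this forces $\dim V_9 = 2$ and hence $V_9 = V_8$. Consequently every cubic through $p_1,\dots,p_8$ already lies in the pencil and so vanishes at $p_9$, which is exactly the conclusion of the theorem. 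Thus the entire proof rests on the claim that any eight of the nine points impose independent conditions on cubics.

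Next I would establish that claim by first ruling out the degenerate configurations that obstruct independence and then constructing, for each $i$, a cubic through the remaining seven points that misses $p_i$. No four of the nine points can be collinear: a line through four of them would meet each $C_j$ in four points, more than the three allowed by B\'ezout, so it would be a component of both $C_1$ and $C_2$, contradicting that they share no component. The same B\'ezout argument shows that no seven of the nine points lie on a conic, since a conic meeting a cubic in seven points (more than six) shares a component with it, and chasing this down produces either a common component of $C_1$ and $C_2$ or four collinear points. To separate a chosen point $p_i$, I would split the other seven points as five plus two, pass a conic $Q$ through the five and a line $L$ through the two, and take the reducible cubic $Q \cup L$; the goal is to choose the partition so that $p_i$ lies on neither $Q$ nor $L$.

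The main obstacle is precisely this last construction: verifying that a partition into a conic-quintuple and a line-pair always exists for which the resulting cubic avoids $p_i$. The boundary cases, where six of the points happen to lie on a single conic or where several triples are collinear, require a short case analysis in which one reshuffles the two points assigned to the line, using the already-established facts that no four points are collinear and no seven lie on a conic to guarantee that some admissible reshuffling yields a cubic through the seven points that genuinely avoids $p_i$. Once separation is achieved for every $i$, the eight points impose independent conditions, $\dim V_8 = 2$, and the theorem follows from the reduction above.
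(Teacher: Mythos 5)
Your proposal is correct, but it takes a genuinely different route from the paper's. The paper (following Tao) argues by contradiction: it assumes a cubic $F_3$ through eight of the points is linearly independent of $F_1$ and $F_2$, and then uses the three-dimensional net $\langle F_1, F_2, F_3 \rangle$ --- whose members can be made to vanish at two additional prescribed points --- to derive progressively stronger position facts (no three of the eight points collinear, no conic through six of them) that hold \emph{only under the reductio hypothesis} and are false in general: in the Pascal configuration three of the nine points are collinear and six lie on a conic. The contradiction then comes from forcing a member of the net to equal a line--conic union while vanishing at a point off that union. You instead prove directly the standalone lemma that the eight points impose independent conditions on cubics, using only the unconditional facts (no four of the nine points collinear, no seven on a conic) together with explicit separating cubics of the form conic-through-five plus line-through-two. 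That lemma is stronger and more classical, but its case analysis is exactly the delicate step the paper's reductio sidesteps --- and, reassuringly, the step you flag as the main obstacle does close. For fixed $p_i$, a split of the remaining seven points into a pair (for the line) and a quintuple (for the conic) fails only if the line through the pair passes through $p_i$, or the conic through the quintuple (unique, since no four points are collinear) passes through $p_i$. Forbidden pairs lie on distinct lines through $p_i$ (a shared line would give five collinear points), hence are pairwise disjoint, so there are at most three of them. And if two distinct forbidden quintuples $S, S'$ satisfied $|S \cap S'| \geq 4$, their conics would share the $\geq 4$ points of $S \cap S'$ together with $p_i$; by B\'ezout they would then share a line carrying at least four of the nine points (equality of the conics is ruled out since it would put seven of the nine on one conic), contradicting no-four-collinear. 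So $|S \cap S'| = 3$, meaning the two-element complements of forbidden quintuples in the seven points are pairwise disjoint: again at most three. Hence at most $6$ of the $\binom{7}{2} = 21$ splits fail, and a separating cubic exists for every $i$. What each approach buys: yours isolates a reusable independence lemma of the kind underlying stronger Cayley--Bacharach statements such as Theorem \ref{CB}, while the paper's contradiction scheme avoids all configuration bookkeeping at the price of working with auxiliary facts that exist only inside the reductio.
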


Inspired by Husem\"oller's book on Elliptic Curves \cite{Husemoeller},
Terry Tao recently gave a simple proof of the $8 \Rightarrow 9$ Theorem in his
blog\footnote{See Tao's July 15, 2011 post at {\tt
    terrytao.wordpress.com}.}. 

\begin{proof} (After Tao) The proof exploits the special position of
  the points in $C_1 \cap C_2  = \{P_1, \ldots, P_9\}$. Let $F_1=0$ and $F_2=0$ be the
  homogeneous equations of the curves $C_1$ and $C_2$. We will show
  that if $F_3$ is a cubic polynomial and $F_3(P_1) = \cdots = F_3(P_8) = 0$
  then $F_3(P_9)=0$. To do this it is enough to show that there are constants $a_1$
  and $a_2$ so that $F_3 = a_1F_1 + a_2F_2$ because then $F_3(P_9) =
  a_1F_1(P_9) + a_2F_2(P_9) = 0$. Aiming for a contradiction, suppose
  that $F_1$, $F_2$ and $F_3$ are linearly independent elements of
  $S_3$.

To start, no four of the points $P_1, \ldots, P_8$ can be collinear
otherwise $C_1$ intersects the line in $4 > (3)(1)$ points so the line
must be a component of $C_1$ by B\'ezout's Theorem. Similarly, the
line must be a component of $C_2$. But $C_1$ and $C_2$ only intersect
in 9 points so this cannot be the case. 

Now we show that there is a unique conic through any 5 of the points
$P_1, \ldots, P_8$. If two conics $Q_1$ and $Q_2$ were to pass through
5 of the points then by B\'ezout's Theorem they must share a
component. So either $Q_1=Q_2$ or both $Q_1$ and $Q_2$ are reducible
and share a common line. Since 4 of the points cannot lie on a line, the common
component must pass through no more than three of the five points. The
remaining two points determine the residual line precisely so $Q_1 =
Q_2$. 

Now we argue that in fact no three of the points $P_1, \ldots, P_8$
can be collinear. Aiming for a contradiction suppose that three of the
points lie on a line $L$ given by $H=0$ and the remaining 5 points lie on a conic
$C$. Since no 4 of the points $P_1, \ldots, P_8$ lie on a line, we
know that the 5 points on $C$ do not lie on $L$.  Pick constants
$b_1$, $b_2$ and $b_3$ so that the polynomial $F = b_1F_1 + b_2F_2 +
b_3F_3$ vanishes on a fourth point on $L$ and at another point $P \not\in L
\cup C$. Since the cubic $F=0$ meets the line $L$ in 4 points, $L$
must be a component of the curve $F=0$. But the residual curve given
by $F/H = 0$ is a conic going through 5 of the 8 points so it must be
$C$ itself. So $F=0$ is the curve $L \cup C$. But $F(P)=0$ by
construction and $P$ does not lie on $L \cup C$, producing the
contradiction. 

Now note that no conic can go through more than $6$ of the points
$P_1, \ldots, P_8$. B\'ezout's Theorem shows that the conic cannot go
through $7$ of the points, else  $C_1$ and $C_2$ would share a common
component. So suppose that a conic $C$ given by $G=0$ goes through 6
of the points. Then there is a line $L$ going through the remaining 2
points. The polynomial $G$ does not vanish at either of these 2 remaining points
because no $7$ of the points lie on a conic.  Pick constants $b_1$,
$b_2$ and $b_3$ so that the cubic $F = b_1F_1 + b_2F_2 + b_3F_3$
vanishes on a seventh point on the conic $C$ and another point $P
\not\in L \cup C$.  Now $F/G$ is a linear form that vanishes on the
two remaining points so $F/G=0$ must determine the line $L$. It
follows that $F=0$ is the curve $L \cup C$. Again, $F(P)=0$ by
construction and $P$ does not lie on $L \cup C$, producing the
contradiction. 

Now let $L$ be the line through $P_1$ and $P_2$ and let $C$ be the
conic through $P_3, \ldots, P_7$. From what we've proven above, $P_8$
does not lie on $L \cup C$.  Pick constants $c_1$,
$c_2$ and $c_3$ so that the cubic $F = c_1F_1 + c_2F_2 + c_3F_3$
vanishes on two more points $P$ and $P'$, both on $L$ but neither on $C$. Since
$F=0$ meets $L$ in four points, $F=0$ contains $L$ as a
component. Since $L$ cannot go through any of $P_3, \ldots, P_7$, the
residual curve mut be $C$, so $F=0$ is the curve $L \cup C$. But then $F(P_8)=0$ by
construction and $P_8$ does not lie on $L \cup C$, producing the
final contradiction. 
\end{proof}

Note that we proved slightly more: any cubic curve passing through $8$
of the nine points must be a linear combination of the two cubics
$C_1$ and $C_2$. 

Pascal's Mystic Hexagon Theorem, Theorem \ref{theorem:Pascal}, follows from an easy application of
the $8 \Rightarrow 9$ Theorem. Let $C_1$ be the cubic consisting of
the 3 lines formed by extending an edge of the hexagon and its two
adjacent neighbors. Let $C_2$ be the cubic consisting of the 3 lines
formed by extending the remaining, opposite, edges. $C_1$ and $C_2$
meet in 6 points on the conic $Q$ and in three points off the
conic. Let $L$ be the line through two of the three points of
intersection not on $Q$. Then $Q \cup L$ is a cubic curve through
$8$ of the $9$ points of $C_1 \cap C_2$. By the $8 \Rightarrow 9$
Theorem, $Q \cup L$ must contain the ninth point too. The point
cannot lie on $Q$ so it must lie on $L$. That is, the three points of
intersection not on $Q$ are collinear.

To prove the Braikenridge--Maclaurin Theorem, Theorem
\ref{Braikenridge-MaclaurinTheorem}, using the
Cayley-Bacharach Theorem, just observe that each collection of three
lines is a cubic curve (it is determined by the vanishing of a
degree-3 polynomial) and if three of the points lie on a line $L$ and
five of the remaining six points lie on a conic $C$ then $L \cup C$ is
a cubic curve passing through 8 of the nine points and so it must pass
through all nine points. However, the ninth point cannot lie on the
line $L$ if the original cubics meet only in points, otherwise $L$
would meet each of the original cubics in more than three points. So
the ninth point must be on the conic $C$.

A more powerful version of the Cayley-Bacharach Theorem can be found
in the last exercise of the Eisenbrick\footnote{An affectionate name 
  for David Eisenbud's excellent (and mammoth) tome on Commutative
  Algebra.} \cite[p. 554]{Eisenbrick} (also see
Eisenbud, Green and Harris \cite[Theorem CB5]{EGH}).  Before stating this result, we introduce some notation. 
Requiring a degree-$d$ curve in $\P^2$ to go through a point $p \in
\P^2$ imposes a non-trivial linear condition on the coefficients of
the defining equation of the curve. If a set $\Gamma$ of $\gamma$ points
imposes only $\lambda$ independent linear conditions on the
coefficients of a curve of degree $d$, then we say that {\em $\Gamma$ 
fails to impose $\gamma - \lambda$ independent linear conditions on
forms of degree $d$}. For example, 9 collinear points fail to impose 5
independent linear conditions on forms of degree 3 -- any cubic that
passes through 4 of the points must pass through them all. More
generally, any set of $k$ collinear points fails to impose $k-(d+1)$
conditions on forms of degree $d \leq k-1$. 

\begin{theorem}[Cayley-Bacharach] \label{CB}
Suppose that two curves of degrees $d_1$ and $d_2$ meet in a finite
collection of points $\Gamma \subset \P^2$. Partition $\Gamma$ into
disjoint subsets: $\Gamma = \Gamma' \cup \Gamma''$ and set $s = d_1+d_2
-3$. If $d \leq s$ is a non-negative integer then the dimension of the space of forms of degree
$d$ vanishing on $\Gamma'$, modulo those vanishing on $\Gamma$, is
equal to the failure of $\Gamma''$ to impose independent conditions on forms of degree
$s-d$.  
\end{theorem}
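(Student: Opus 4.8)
The plan is to recast the asserted equality as a single statement about dimensions inside the value space $\C^{\Gamma''}$ and then recognize it as an orthogonality relation for an explicit nondegenerate pairing. Write $S=\C[x,y,z]$, let $F,G$ be the defining forms of degrees $d_1,d_2$, and (as in every application in this paper) assume $\Gamma$ consists of $d_1d_2$ distinct points at which $F$ and $G$ meet transversally; choose coordinates so that the line at infinity misses $\Gamma$ and fix the affine representatives $(x_p,y_p,1)$ of the points. For a finite set $X$ and degree $m$, let $h_X(m)$ be the number of independent conditions $X$ imposes on $S_m$, so the failure of $X$ to impose independent conditions equals $|X|-h_X(m)$. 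Restricting values to $\Gamma''$ carries the space of degree-$d$ forms vanishing on $\Gamma'$, modulo those vanishing on all of $\Gamma$, isomorphically onto
\[ V_d=\{\, f|_{\Gamma''}\,:\, f\in S_d,\ f|_{\Gamma'}=0\,\}\subseteq \C^{\Gamma''}, \]
and a short rank--nullity bookkeeping on the evaluation map $S_d\to\C^{\Gamma'}\oplus\C^{\Gamma''}$ gives $\dim V_d=h_\Gamma(d)-h_{\Gamma'}(d)$. The right-hand side of the theorem is $|\Gamma''|-\dim W_{s-d}$, where $W_{s-d}=\{\,g|_{\Gamma''}:g\in S_{s-d}\,\}$ is the image of $S_{s-d}$ under evaluation at $\Gamma''$, of dimension $h_{\Gamma''}(s-d)$. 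Thus the theorem reduces to the clean identity $\dim V_d+\dim W_{s-d}=|\Gamma''|$.

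To prove this I would introduce the residue pairing on the value space. Let $\mathcal{J}=\det\,\partial(F,G)/\partial(x,y)$ be the affine Jacobian; transversality means $\mathcal{J}(p)\neq 0$ at every $p\in\Gamma$, so
\[ B(u,v)=\sum_{p\in\Gamma}\frac{u_p\,v_p}{\mathcal{J}(p)} \]
is a nondegenerate diagonal form on $\C^{\Gamma}$ whose restriction $B''$ to the coordinate subspace $\C^{\Gamma''}$ is again nondegenerate. The one genuinely nontrivial input is the classical Euler--Jacobi (global residue) vanishing theorem: for any form $H$ of degree at most $s=d_1+d_2-3$ one has $\sum_{p\in\Gamma}H(p)/\mathcal{J}(p)=0$. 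Applying this to $H=fg$ with $f\in S_d$ vanishing on $\Gamma'$ and $g\in S_{s-d}$ arbitrary, the sum collapses to its $\Gamma''$-part, so $B''(f|_{\Gamma''},g|_{\Gamma''})=0$; hence $V_d\perp_{B''}W_{s-d}$, which already yields $\dim V_d+\dim W_{s-d}\le|\Gamma''|$.

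For the reverse inequality I would upgrade this orthogonality to the sharper statement that, under $B$, the image $\mathrm{ev}_m(S_m)\subseteq\C^{\Gamma}$ of all degree-$m$ forms has $B$-orthogonal complement \emph{exactly} $\mathrm{ev}_{s-m}(S_{s-m})$. One containment is Euler--Jacobi again; equality then follows by a dimension count from the symmetry $h_\Gamma(m)+h_\Gamma(s-m)=|\Gamma|$ of the Hilbert function of the complete intersection, which I would read off from the self-dual Koszul resolution $0\to S(-d_1-d_2)\to S(-d_1)\oplus S(-d_2)\to S\to S/(F,G)\to 0$ and the resulting Hilbert series $(1-t^{d_1})(1-t^{d_2})/(1-t)^3$. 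Granting this, a vector $u\in\C^{\Gamma''}$ that is $B''$-orthogonal to $W_{s-d}$ extends by zero to $\hat u\in\C^{\Gamma}$ that is $B$-orthogonal to $\mathrm{ev}_{s-d}(S_{s-d})$, hence lies in $\mathrm{ev}_d(S_d)$; the realizing form automatically vanishes on $\Gamma'$ and restricts to $u$ on $\Gamma''$, so $u\in V_d$. Therefore $V_d=(W_{s-d})^{\perp_{B''}}$, and nondegeneracy of $B''$ gives $\dim V_d+\dim W_{s-d}=|\Gamma''|$, as required.

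I expect the main obstacle to be the Euler--Jacobi vanishing theorem, which carries the real geometric content and replaces the abstract Gorenstein self-duality of the ring $S/(F,G)$ by a concrete residue computation; verifying it cleanly in the projective setting uses that $\Gamma$ avoids the line at infinity and that the top-degree parts of $F$ and $G$ have no common zero, both arranged by a generic choice of coordinates. The second delicate point is the reduced, transverse hypothesis: if $F$ and $G$ are tangent or meet with higher multiplicity, the weights $1/\mathcal{J}(p)$ must be replaced by local Grothendieck residues on the Artinian local rings $\mathcal{O}_{\Gamma,p}$ and $B$ by the corresponding local-duality pairing. A route that avoids residues entirely is liaison: $\Gamma'$ and $\Gamma''$ are geometrically linked by the complete intersection $\Gamma$, and a mapping-cone comparison of the resolutions of $S/I_\Gamma$ and $S/I_{\Gamma'}$ produces a resolution of $S/I_{\Gamma''}$ from which the same Hilbert-function identity falls out directly; I would keep that as the purely algebraic backup.
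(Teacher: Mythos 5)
Your proposal cannot be compared against the paper's own proof line by line, because the paper does not prove Theorem \ref{CB}: it states the result and cites the final exercise of Eisenbud's book \cite[p.~554]{Eisenbrick} and Eisenbud--Green--Harris \cite[Theorem CB5]{EGH}. So you have supplied an argument where the paper outsources one, and the argument you supply is sound. Your reduction of both sides to the single identity $\dim V_d + \dim W_{s-d} = |\Gamma''|$ is correct (the rank--nullity step giving $\dim V_d = h_\Gamma(d) - h_{\Gamma'}(d)$ checks out), and the two inputs you lean on are the right ones: Euler--Jacobi vanishing makes $V_d$ and $W_{s-d}$ orthogonal under the nondegenerate diagonal form $B''$, and the symmetry $h_\Gamma(m) + h_\Gamma(s-m) = |\Gamma|$, read off the Koszul resolution of $S/(F,G)$ (legitimate here because the ideal of a complete intersection is saturated, so $h_\Gamma(m) = \dim (S/(F,G))_m$), upgrades the containment $\mathrm{ev}_{s-m}(S_{s-m}) \subseteq \mathrm{ev}_m(S_m)^{\perp_B}$ to equality; the extension-by-zero argument then closes the loop. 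Conceptually this is a concrete, linear-algebra instantiation of exactly the Gorenstein self-duality that EGH identify as the content of the theorem --- the paper itself notes in Section \ref{Section:Fun} that Cayley--Bacharach is equivalent to polynomial rings being Gorenstein, and your liaison fallback matches the paper's closing pointer to liaison theory. What your route buys is self-containedness and explicitness (a residue computation rather than abstract duality); what it costs is generality, and that is the one real caveat, which you flag yourself: your proof covers only the case where the curves meet transversally in $d_1 d_2$ reduced points, while the statement as given (and CB5 in \cite{EGH}) allows arbitrary finite intersection, where $\Gamma$ must be read scheme-theoretically and the weights $1/\mathcal{J}(p)$ replaced by local Grothendieck residues. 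Since the paper's invocations of Theorem \ref{CB} are in the generic reduced setting (the paper explicitly remarks that non-generic configurations require reinterpreting the conclusion via intersection multiplicities), this restriction does not undermine the proposal, but a fully general proof would require actually carrying out the local-duality or liaison variant rather than merely naming it.
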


We restate our generalization of the Braikenridge-Maclaurin Theorem
and give a proof using the Cayley-Bacharach Theorem. Kirwan
\cite[Theorem 3.14]{Kirwan} gives a simple proof for the existence
part of the Theorem that is well-worth examining. 

\setcounter{theorem}{3}
\begin{theorem} Suppose that $2k$ lines in the
  projective plane meet another line in $k$ triple points. Color the
  lines so that the line containing all the triple points is green and
  each of the $k$ collinear triple points has a red and a blue line
  passing through it.  Then there is a unique curve of degree $k-1$
  passing through the points where the red lines meet the blue lines
  (off the green line). 
\end{theorem}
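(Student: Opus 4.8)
The plan is to apply the Cayley--Bacharach Theorem (Theorem \ref{CB}) to the two degree-$k$ curves built from the colored lines. Let $C_1$ be the union of the $k$ red lines and $C_2$ the union of the $k$ blue lines; each is a plane curve of degree $k$, cut out by the product of the linear forms of its constituent lines. Since the red and blue lines are distinct, $C_1$ and $C_2$ share no component, so by B\'ezout's Theorem they meet in a finite set $\Gamma = C_1 \cap C_2$ consisting of $k^2$ points (counted with multiplicity). I would partition $\Gamma = \Gamma' \cup \Gamma''$, where $\Gamma''$ is the set of $k$ triple points lying on the green line $G$ (after relabeling, the triple point $T_i$ is the common point of $R_i$, $B_i$ and $G$) and $\Gamma'$ is the complementary set of $k^2 - k$ intersection points lying off $G$. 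Since each red line meets $G$ only at its triple point, and likewise for the blue lines, the points of $\Gamma$ on $G$ are exactly the $T_i$, so $\Gamma'$ really is the off-green locus through which we seek a degree-$(k-1)$ curve.

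With $d_1 = d_2 = k$ we have $s = d_1 + d_2 - 3 = 2k - 3$, and I would take $d = k - 1 \le s$. Theorem \ref{CB} then equates the dimension of the space of degree-$(k-1)$ forms vanishing on $\Gamma'$, modulo those vanishing on all of $\Gamma$, with the failure of $\Gamma''$ to impose independent conditions on forms of degree $s - d = k - 2$. Because $\Gamma''$ consists of $k$ collinear points, the count recorded just before Theorem \ref{CB} (applied with form-degree $k-2$) shows this failure equals $k - ((k-2)+1) = 1$. Thus the space of degree-$(k-1)$ forms through $\Gamma'$ is exactly one dimension larger than the space of degree-$(k-1)$ forms through all of $\Gamma$.

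To finish, I would show that no nonzero form of degree $k-1$ vanishes on all of $\Gamma$. Fix a red line $R_i$; it meets the $k$ blue lines in $k$ points of $\Gamma$ (distinct in the generic case, where no two blue lines are concurrent with $R_i$). A degree-$(k-1)$ form vanishing on $\Gamma$ restricts to a form of degree $k-1$ on $R_i \cong \P^1$ with $k > k-1$ zeros, hence vanishes identically on $R_i$, making $R_i$ a component of the curve. Running this over all $k$ red lines would force a degree-$(k-1)$ form to contain $k$ distinct lines, which is impossible unless the form is zero. Hence the space of degree-$(k-1)$ forms vanishing on $\Gamma$ is trivial (vector-space dimension $0$), and by the previous paragraph the space of degree-$(k-1)$ forms vanishing on $\Gamma'$ has dimension exactly $1$. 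Projectivizing, there is a unique curve of degree $k-1$ through the off-green intersection points, which yields existence and uniqueness simultaneously.

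I expect the main obstacle to be the bookkeeping that pins the dimension to exactly $1$ rather than merely to at least $1$: existence of the curve is classical (Kirwan \cite[Theorem 3.14]{Kirwan}), so the genuine content is uniqueness, and this hinges on combining the failure computation (which supplies the $+1$) with the vanishing argument on $\Gamma$ (which supplies the base dimension $0$). A secondary point to handle carefully is the degenerate situation, where coincidences among intersection points force $\Gamma$ to be read as a scheme with multiplicities; in the generic configuration the $k^2-k$ off-green points are distinct and the argument above is cleanest.
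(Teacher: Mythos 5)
Your proposal is correct and follows essentially the same route as the paper's own proof: the same partition of $\Gamma = C_1 \cap C_2$ into the $k$ collinear triple points $\Gamma''$ and the off-green points $\Gamma'$, the same application of Cayley--Bacharach with $d = k-1$ and $s-d = k-2$ yielding failure $1$, and the same B\'ezout argument that no degree-$(k-1)$ form vanishes on all of $\Gamma$ (the paper runs it over the blue lines rather than the red, which is immaterial). The only cosmetic difference is that the paper explicitly disposes of the trivial cases $k=1,2$ before assuming $k \geq 3$, and it relegates the degenerate-configuration discussion to a remark after the proof, just as you flag it at the end.
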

\setcounter{theorem}{8}

\begin{proof}  The cases $k=1$ and $k=2$ are
  trivial so assume $k \geq 3$.  Suppose that the red lines are cut
  out by the forms $L_1,\ldots,L_k$, the blue lines are cut out by the
  forms $M_1, \ldots, M_k$ and the green line is cut out by the form
  $G$. Let $\Gamma$ be the points of intersection of the two degree
  $k$ forms $L_1 \cdots L_k$ and $M_1 \cdots M_k$. Note that there are
  no degree-$(k-1)$ curves that pass through all the points of $\Gamma$:
  any such curve meets the line $M_i = 0$ in $k$ points so each $M_i$
  divides the equation of the curve, leading to a contradiction on the
  degree of the defining equation. Now let $\Gamma'$ be the points of
  $\Gamma$ that lie off the green line and $\Gamma''$ the points of
  $\Gamma$ on the green line. The $k$ collinear points in $\Gamma''$
  impose $k-1$ independent conditions on forms of degree $k-2$. So
  $\Gamma''$ fails to impose $k-(k-1) =1$ condition on forms of degree
  $k-2$.  Because there are no curves of degree $k-1$ going through
  all of $\Gamma$, the Cayley-Bacharach Theorem says that the
  dimension of the space of forms of degree $k-1$ vanishing on
  $\Gamma'$ is equal to the failure of $\Gamma''$ to impose
  independent conditions on forms of degree $k+k-3-(k-1)=k-2$, which
  is one. So up to scaling, there is a unique equation of degree $k-1$
  passing through the points of $\Gamma$ off the green line.
\end{proof}

In the generic case, just one red and one blue line pass
through each point of intersection, and the curve $S$ passes through
all $k^2-k$ points of intersection between the red lines and the blue
lines that do not lie on the green line. If we are not in the generic
case the uniqueness claim needs further interpretation. For those that
know about intersection multiplicity, the curve $S$ is the unique
curve whose intersection multiplicity with the union of the red lines at the
points of intersection off the green line equals the intersection
multiplicity of the union of the blue lines with the union of the red
lines at those same points (and we can replace red with blue in this
statement). 

M\"obius \cite{Mobius} also generalized the Braikenridge-Maclaurin Theorem,
but in a different direction.  Suppose a polygon with $4n+2$ sides is
inscribed in a irreducible conic and we determine $2n+1$ points by
extending opposite edges until they meet. If $2n$ of these $2n+1$
points of intersection lie on a line then the last point also lies on
the line. Using the Cayley-Bacharach Theorem allows us to extend
M\"obius's result, relaxing the constraint on the number of sides of
the polygon.

\begin{theorem}
  Suppose that a polygon with $2k$ sides is inscribed in an irreducible
  conic. Working around the perimeter of the polygon, color the edges
  alternately red and blue. Extending the edges to lines, consider the
  $k^2-2k$ points of intersection of the red and blue lines, omiting
  the original $2k$ vertices of the polygon. If $k-1$ of these points
  lie on a green line, then in fact another of these points lies on
  the green line as well. \label{Mobiusextension}
\end{theorem}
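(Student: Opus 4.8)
The plan is to show that the $k^2-2k$ intersection points lying off the conic actually lie on a \emph{single} curve $E$ of degree $k-2$; once this is known, the green line must be a component of $E$, and counting the intersections of $E$ with the red lines produces the extra point almost for free.

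First I would set up the Cayley--Bacharach machinery exactly as in the proof of Theorem~\ref{construction}. Let $F_1$ be the product of the $k$ red line-forms and $F_2$ the product of the $k$ blue line-forms, both of degree $k$, let $\Gamma$ be their $k^2$ common zeros, and set $s=2k-3$. Partition $\Gamma=\Gamma'\cup\Gamma''$, where $\Gamma''$ is the set of $2k$ polygon vertices (which all lie on the conic) and $\Gamma'$ is the set of $k^2-2k$ remaining intersection points. The key computation is that the $2k$ vertices fail to impose exactly one condition on forms of degree $k-1$: a form of degree $k-1$ restricts to the conic (a copy of $\P^1$) as a form of degree $2k-2$, and the space of these has dimension $2k-1$, so $2k$ distinct points on the conic can impose at most $2k-1$ independent conditions. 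Applying Theorem~\ref{CB} with $d=k-2$ (so that $s-d=k-1$) then equates the dimension of the degree-$(k-2)$ forms vanishing on $\Gamma'$, modulo those vanishing on all of $\Gamma$, to this failure, namely $1$. Since no nonzero form of degree $k-2$ can vanish on all of $\Gamma$ -- such a form would vanish at the $k$ points of $\Gamma$ on each red line and hence be divisible by all $k$ red line-forms, contradicting its degree -- I conclude that there is a unique curve $E$ of degree $k-2$ passing through every one of the other points. (For $k=3$ this $E$ is precisely the Pascal line, so $E$ should be regarded as the natural generalization of that line.)

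Next I would feed in the hypothesis. The green line $G$ passes through $k-1$ of the other points, all of which lie on $E$; since $k-1>k-2=\deg E$, B\'ezout's Theorem forces $G$ to be a component of $E$, say $E=G\cup E'$ with $\deg E'=k-3$. To extract the promised extra point, fix a red line $r_i$. It carries exactly $k-2$ of the other points (its intersections with the $k-2$ non-adjacent blue lines), all of which lie on $E$; as $E$ has degree $k-2$, these account for the entire intersection $E\cap r_i$ in the generic situation where $r_i$ is not itself a component of $E$. Because $G\subseteq E$, the point $G\cap r_i$ lies in $E\cap r_i$ and is therefore one of the other points, lying on $G$. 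Letting $r_i$ range over all $k$ red lines yields $k$ distinct other points on $G$ -- one more than the $k-1$ we began with -- which is the desired conclusion.

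The hard part is the first paragraph: recognizing that the other points are constrained to a curve of degree $k-2$ rather than the expected $k-1$. This one-degree drop is exactly the single failed condition contributed by the $2k$ vertices lying on a conic, and it is what makes the green line a component of $E$ instead of a generic secant. The remaining steps are B\'ezout bookkeeping. Degenerate configurations -- a red line that happens to be a component of $E$, or coincident intersection points -- would be handled, as in the remark following Theorem~\ref{construction}, by phrasing everything in terms of intersection multiplicity.
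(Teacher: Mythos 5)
Your proof is correct, and its first half coincides with the paper's: both apply Theorem \ref{CB} with $\Gamma''$ the $2k$ vertices and $d = k-2$, after ruling out degree-$(k-2)$ curves through all of $\Gamma$, to produce a curve $E$ of degree $k-2$ through the $k^2-2k$ points off the conic. (You compute the failure by restricting to the conic viewed as a $\P^1$; the paper instead notes that any degree-$(k-1)$ curve through the $2k$ points must contain the irreducible conic as a component and counts $2k - \left[\binom{k+1}{2} - \binom{k-1}{2}\right] = 1$. These are equivalent, though as written your argument only bounds the number of imposed conditions above, giving failure $\geq 1$ rather than $=1$; this costs nothing, since you only ever use the existence of $E$, never its uniqueness.) Where you genuinely diverge is the endgame. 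The paper, having forced $G \subset E$ by B\'ezout, takes the union of the residual degree-$(k-3)$ curve with the conic to obtain a degree-$(k-1)$ curve through all points of $\Gamma$ off the green line, then runs Cayley--Bacharach a \emph{second} time (partitioning $\Gamma$ by the green line) to conclude that at least $k$, hence exactly $k$, points of $\Gamma$ lie on $G$. You avoid the second Cayley--Bacharach application entirely: from $G \subset E$ you intersect $E$ with each red line $r_i$ and observe that $E \cap r_i$ is exhausted by the $k-2$ non-vertex points on $r_i$, so $G \cap r_i$ is one of ``these points'' for every $i$ (your tacit step that the $k-1$ hypothesis points lie on $k-1$ distinct red lines is immediate, since two of them on one red line would force $G$ to equal that line). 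This finish is more elementary than the paper's, and it carries a small bonus: it shows directly that the $k$-th point on $G$ cannot be a polygon vertex, because $E \cap r_i$ contains no vertices -- a case the paper's counting of points of $\Gamma$ on the green line handles only implicitly. Both arguments defer concurrences, coincident points, and the possibility of a red line inside $E$ to the intersection-multiplicity convention of the remark following Theorem \ref{construction}.
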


The theorem is illustrated when $k=4$  in Figure \ref{Mobius8}:
both the green and purple lines contain 3 of the 8 points off the
conic, so they must each contain a fourth such point too. 

\begin{figure}[h!t]
\begin{center}
\includegraphics[width=0.75\textwidth]{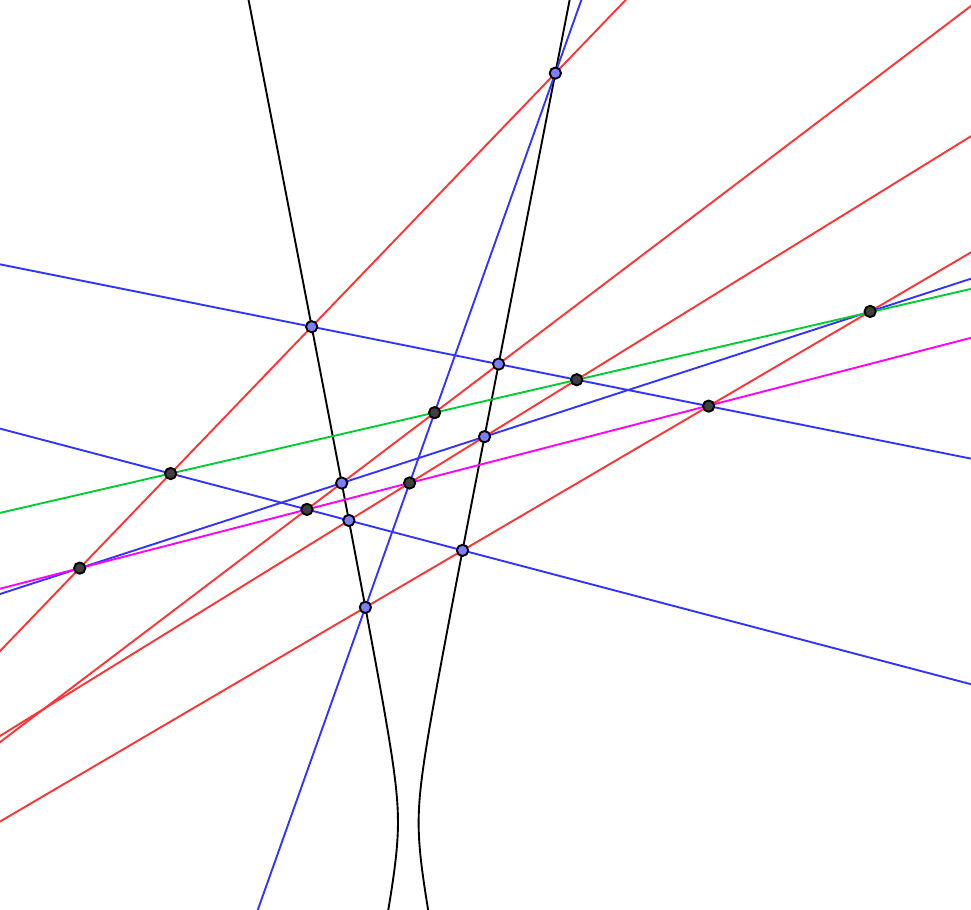}
\end{center}
\caption{An illustration of Theorem \ref{Mobiusextension} when $k=4$. }
\label{Mobius8}
\end{figure}
 
\begin{proof}[Proof of Theorem \ref{Mobiusextension}.]
  First compute the dimension of the degree-$(k-2)$ curves that go
  through all the points off the conic. Since there are no
  degree-$(k-2)$ curves through all the $k^2$ points of intersection
  of the extended edges, the Cayley-Bacharach Theorem gives that this
  dimension equals the failure of $2k$ points that lie on a conic to
  impose independent conditions on curves of degree $k+k-3-(k-2)=
  k-1$. Because the conic is irreducible, these degree-$(k-1)$ curves
  must contain the conic as a component. So the failure is $2k$ minus
  the difference between the dimension of the space of degree $k-1$
  curves in $\P^2$ and the dimension of the space of degree $k-3$
  curves in $\P^2$. The failure is thus $$2k - \left[ \binom{k+1}{2} -
    \binom{k-1}{2} \right] = 1. $$ So, up to scaling, there is a
  unique curve of degree $k-2$ through all the points off the
  conic. Since this curve meets the green line in at least $k-1$
  points, B\'ezout's Theorem shows that it must contain the line as a
  component. Taking the union of the residual curve (of degree $k-3$)
  with the conic gives a curve of degree $k-1$ through all the points
  on both the red and blue lines that do not lie on the green
  line. Now the Cayley-Bacharach Theorem says that the dimension of
  all degree-$(k-1)$ curves through all the points off the green line
  equals the failure of the points on the green line to impose
  independent conditions on forms of degree $k-2$. There are at least
  $k-1$ points on the green line, so the failure equals $$ \text{(\#
    points on the line)} - \left[ \binom{k}{2} - \binom{k-1}{2}
  \right] = \text{(\# points on the line)} - (k-1). $$ But there is
  such a curve so this number must be at least one, in which case the
  number of points on the green line must be at least $k$. Of course,
  the number of points on the green line is bounded by the number of
  points on the intersection of the green line with the $k$ red lines
  so there are precisely $k$ intersection points on the green
  line. This shows that the last point must also lie on the green line
  and establishes M\"obius's result.
\end{proof}

\end{section}
\begin{section}{Constructible curves}
\label{section:construction}

Let's take a constructive view of Theorem \ref{construction}, our
extension of the Braikenridge-Maclaurin Theorem. Say that
a curve $X$ of degree $d$ is {\em constructible} if there exist $d+1$ red
lines $\ell_1, \ldots, \ell_{d+1}$ and $d+1$ blue lines $L_1, \ldots,
L_{d+1}$ so that the $d+1$ points $\{\ell_i \cap L_i: 1 \leq i \leq
d+1\}$ are collinear and the other $d(d+1)$ points $\{\ell_i \cap L_j:
i \neq j\}$ lie on $X$. 

We turn to the question of which curves are constructible.  In
particular, we aim to show that for a certain range of degrees $d$
almost all curves of degree $d$ are constructible and for degrees
outside of this range, almost no curves of degree $d$ are
constructible. One way to make such statements precise is to introduce
the Zariski topology on projective space. 

The Zariski topology is the coarsest topology that makes polynomial
maps from $\P^m$ to $\P^n$ continuous. More concretely, every
homogeneous polynomial $F$ in $n+1$ variables determines a closed set
in $\P^n$ $$\V(F) =
\{P \in \P^n \; : \; F(P)=0\},$$ 
and every closed set is built up by taking finite unions and arbitrary
intersections of such sets. Closed sets in the Zariski topology are
called varieties. The nonempty open sets in this topology
are dense: their complement is contained in a set of the form
$\V(F)$. 

We'll say that {\em the construction is dense for degree-$d$ curves}
if there is a nonempty Zariski-open  set of degree-$d$ curves $U$ such that
each $X \in U$ is constructible. 

\begin{question}
For which degrees is the construction dense? 
\end{question}
 
The construction is clearly dense for degree-1 curves
(lines). Pascal's Theorem, Theorem \ref{theorem:Pascal}, shows that the construction is dense
for degree-2 curves. 

We give a simple argument to show that the construction cannot be
dense if $d \geq 6$. Consider the number of parameters that can be
used to define an arrangement of $2d+3$ lines so that there are $d+1$
triple points on one of the lines. Two parameters are needed to define
the green line and then we need $d+1$ parameters to determine the
triple points and $2(d+1)$ parameters to choose the slopes of pairs of
lines through these points. So a $\left( 3d+5\right)$-dimensional
space parameterizes the line arrangements. The space of degree-$d$
curves is parameterized by a $\left( \binom{d+2}{2}-1\right)$
projective space. Since $$\binom{d+2}{2}-1 = \frac{d^2+3d}{2} >
3d+5, $$ when $d \geq 6$, it is impossible for the line arrangements
to parameterize a nonempty Zariski-open set of dimension
$\binom{d+2}{2}-1$ when $d \geq 6$.

Using the group law on elliptic curves allows
us to show that the construction is dense for degree-3 curves.

\begin{theorem} \label{constructibledeg3} The construction is dense for degree-3
  curves. \end{theorem}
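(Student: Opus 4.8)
The plan is to show that the map $\Phi$ sending a valid line arrangement to the degree-$3$ curve it produces is dominant, by exhibiting, for a smooth cubic $X$, an explicit $5$-parameter family of arrangements that all produce $X$. Since the arrangements form a $(3\cdot 3+5)=14$-dimensional space while cubics form the $9$-dimensional space $\P^9$, a $5$-dimensional fibre over $X$ is exactly what dominance demands, and dominance together with Chevalley's theorem produces the nonempty Zariski-open set $U$ of constructible cubics required by the definition.

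First I would fix a smooth cubic $X$, view it as an elliptic curve, and place the group law on it with a flex $O$ as the identity, so that three points of $X$ are collinear precisely when they sum to $O$. Writing the four red lines as $\ell_1,\dots,\ell_4$ and the blue lines as $L_1,\dots,L_4$, a constructible configuration for $d=3$ is a $4\times 4$ grid of points $a_{ij}=\ell_i\cap L_j$ whose twelve off-diagonal entries lie on $X$ and whose four diagonal entries lie on the green line. Collinearity of the three on-$X$ points in each row and each column becomes, via the group law, the system $\sum_{j\neq i}a_{ij}=O$ (one equation per red line) and $\sum_{i\neq j}a_{ij}=O$ (one per blue line). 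Gaussian elimination over the group shows these eight equations carry a single redundancy, so one may choose five of the points freely and recover the remaining seven by adding and subtracting in the group law; this gives an explicit parametrization $E^5 \to X^{12}$ of a genuinely $5$-dimensional family of grids.

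It then remains to verify that such a grid is a valid arrangement, i.e. that the diagonal points $a_{ii}=\ell_i\cap L_i$ are collinear, and here I would invoke Theorem \ref{CB} rather than compute directly. The sixteen grid points form the complete intersection of the quartics $\ell_1\ell_2\ell_3\ell_4$ and $L_1L_2L_3L_4$, so $d_1=d_2=4$ and $s=5$. Taking $\Gamma'$ to be the twelve off-diagonal points, $\Gamma''$ the four diagonal points, and $d=3$, the theorem equates the dimension of cubics through $\Gamma'$ (at least one, since $X$ passes through them) with the failure of the four diagonal points to impose independent conditions on conics. As four distinct points fail to impose independent conditions on conics only when they are collinear, the diagonal lies on a line; colouring that line green produces four triple points, and Theorem \ref{construction} recovers a unique cubic through the twelve off-green points, which is forced to be $X$.

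The main obstacle is not any single computation but the genericity bookkeeping that converts a generic solution of the group-law system into a statement about an open set of cubics. One must check that for a generic choice of the five free points the twelve points are distinct, the eight lines are distinct and general enough that the sixteen grid points are distinct with no cubic through all sixteen (so the Cayley-Bacharach count above is valid), and that the diagonal points lie off $X$ so the green line is honest. These being open conditions, it suffices to exhibit one nondegenerate member. Because an arrangement producing $X$ is recovered from its twelve grid points, the fibre $\Phi^{-1}(X)$ is exactly this $5$-dimensional family, and since $14-5=9=\dim \P^9$ the map $\Phi$ is dominant, yielding the desired open set of constructible cubics. The only place nondegeneracy can genuinely fail is at special configurations governed by the torsion of $E$, but as only an open set of cubics is needed this does not affect the conclusion.
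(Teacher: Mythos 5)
Your proposal is correct, and it shares the paper's backbone -- fix a smooth cubic $X$, use the group law (with a flex as identity, so collinear means sum to the identity) to manufacture four red and four blue lines whose twelve mixed intersections lie on $X$, then apply Cayley--Bacharach to force the four leftover points onto a green line -- but it executes both halves differently. For the grid, the paper builds an explicit solution by hand: pick $p_1,\dots,p_5$ on $X$ with no three collinear and derive the remaining seven points ($-(p_1+p_2)$, $-(p_1+p_4)$, $-(p_2+p_3)$, $-(p_4+p_5)$, $p_1+p_2+p_3+p_4$, $-(p_1+p_2+p_3+p_4+p_5)$, $p_1+p_2+p_4+p_5$) one line at a time. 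You instead solve the abstract system of eight row- and column-sum equations on the twelve off-diagonal entries and count one redundancy; this is right (a functional $\sum\lambda_i R_i+\sum\mu_j C_j=0$ forces $\lambda_i+\mu_j=0$ for all $i\neq j$, hence a one-dimensional relation space and rank $7$), and the paper's recipe is precisely a triangularization of your system with $p_1,\dots,p_5$ as the free variables, which substantiates your ``Gaussian elimination over the group'' claim. The more interesting divergence is the Cayley--Bacharach step: the paper takes $\Gamma'$ to be the four leftover points and $d=1$, so it must exhibit a $4$-dimensional space of quartics through the twelve points on $X$ (linear forms times the cubic give three dimensions, the product of the red lines a fourth); you flip the partition, taking $\Gamma'$ to be the twelve points and $d=3$, so that the failure lands on conics through the four diagonal points, and you need only the elementary fact that four distinct points fail to impose independent conditions on conics exactly when they are collinear. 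Your version is arguably cleaner, since the cubic $X$ itself supplies the nonzero class through $\Gamma'$ for free. Two trims: the dominance/Chevalley frame and fibre-dimension bookkeeping are unnecessary -- once every smooth cubic is constructible, density follows at once because smooth cubics form a nonempty Zariski-open subset of $\P^9$, which is exactly how the paper concludes -- and the closing appeal to Theorem \ref{construction} is likewise not needed, since the definition of constructibility asks only that the twelve points lie on $X$ and the diagonal points be collinear. Your genericity caveats (distinct lines and points, distinct diagonal points, no cubic through all sixteen so the quotient in Theorem \ref{CB} is trivial) are the right ones and are witnessed by any sufficiently general choice of the five free points.
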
 

\begin{proof} The set of smooth plane curves of degree 3 is a nonempty
  Zariski-open set in the space $\P^9$ parameterizing all degree-3
  curves \cite[Theorem 2 in Section II.6.2]{Shaf1}. Such curves are called elliptic curves and their
  points form a group:  three distinct points add to the identity element in the
  elliptic curve group if and only if they are collinear\footnote{If
    two of the points are the same then the line must also be tangent
    to $X$ at this point, while if all three points are the same then
    the tangent line to $X$ at the point must intersect $X$ with
    multiplicity 3.}.  Given an elliptic curve $X$ we pick 5 points,
  $p_1$, \ldots, $p_5$ on the curve, no three of which are
  collinear. We will construct the red, blue and green lines; the
  reader may wish to refer to the schematic diagram in Figure
  \ref{schematic} as the construction proceeds. 

\begin{figure}[h!t]
\begin{center}
\includegraphics[width=0.75\textwidth]{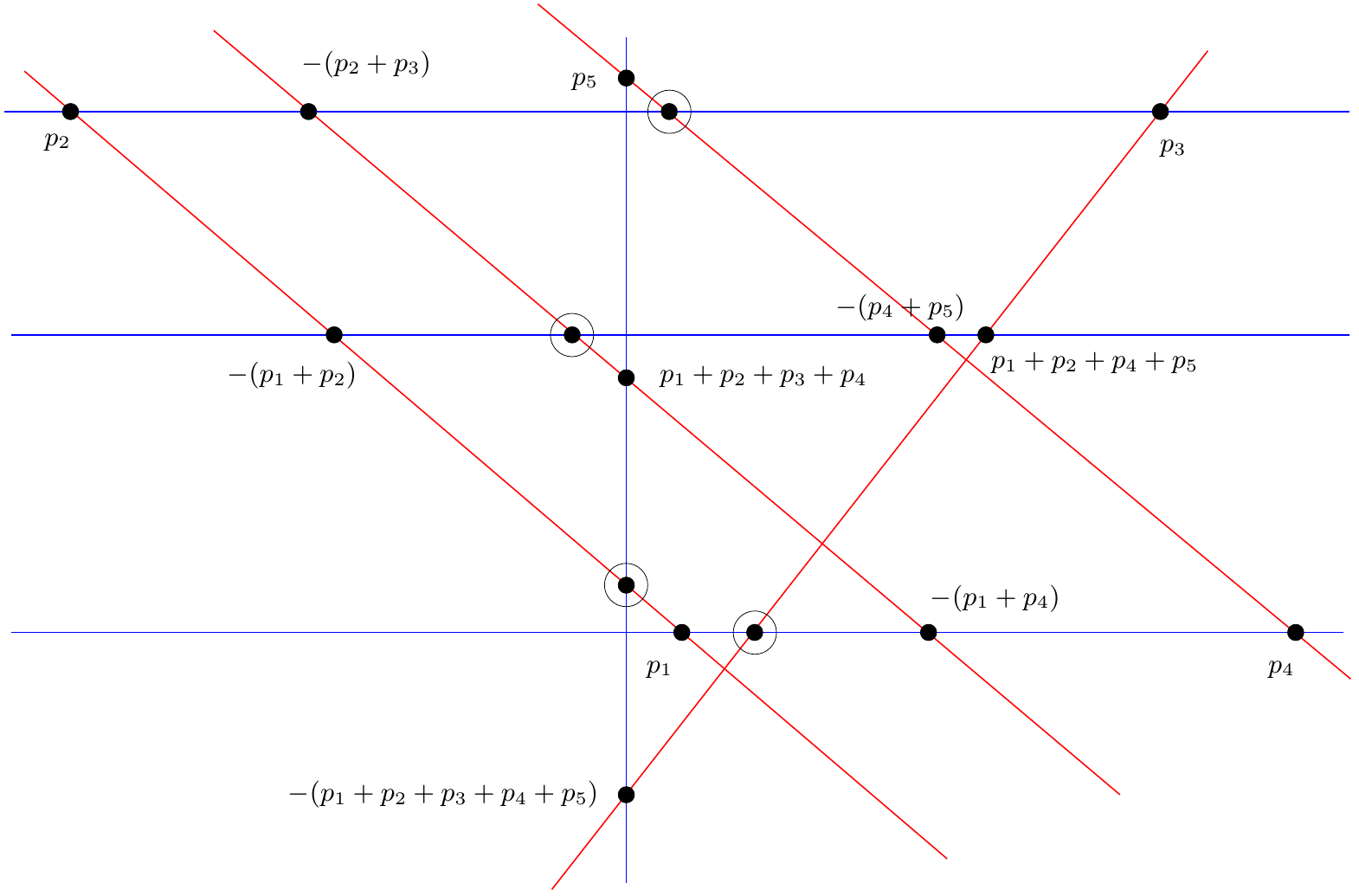}
\end{center}
\caption{A Schematic Illustration of the Construction in the Proof of
  Theorem \ref{constructibledeg3}.  }
\label{schematic}
\end{figure}

We draw a red line
  connecting points $p_1$ and $p_2$, meeting $X$ in the third point
  $-(p_1+p_2)$.  A blue line joining $p_1$ and $p_4$ meets $X$
  at $-(p_1+p_4)$ and a blue line joining $p_2$ and $p_3$ meets $X$ at
  $-(p_2+p_3)$. A red line joining $p_4$ and $p_5$ meets $X$ at
  $-(p_4+p_5)$. A red line joins $-(p_1+p_4)$ and $-(p_2+p_3)$ and
  meets $X$ in the point $p_1+p_2+p_3+p_4$. A blue line joins
  $p_1+p_2+p_3+p_4$ to $p_5$, meeting $X$ in
  $-(p_1+p_2+p_3+p_4+p_5)$. A red line joins $-(p_1+p_2+p_3+p_4+p_5)$
  to $p_3$, meeting $X$ in the point $p_1+p_2+p_4+p_5$. A blue line
  through $-(p_1+p_2)$ and $-(p_4+p_5)$ also hits $X$ at
  $p_1+p_2+p_4+p_5$. The four red lines meet the four blue lines in 16
  points, 12 of which lie on the elliptic curve $X$. We will prove
  that the other 4 points, circled in the schematic Figure
  \ref{schematic} are collinear (lying on the green line) using the
  Cayley-Bacharach Theorem. Indeed, let $\Gamma$ be the 16 points
  where the red lines meet the blue lines and let $\Gamma''$ be the 12
  points lying on the cubic $X$. Let $\Gamma' = \Gamma \setminus
  \Gamma''$ be the residual set of the four circled points. Since
  there are no degree-1 curves vanishing on the 16 points of $\Gamma$, the
  Cayley-Bacharach Theorem says that the dimension of the space of
  degree-1 curves vanishing on all four points of $\Gamma'$ equals the
  failure of $\Gamma''$ to impose independent conditions on curves of
  degree $4+4-3-1=4$. The failure equals $12$ minus the codimension of
  the degree-4 forms vanishing on $\Gamma''$ in the space of all
  degree-4 forms. This is equal to three less than the dimension of
  the vector space of degree-4 forms vanishing on $\Gamma''$:
$$ \begin{aligned} & 12 - \left[ \binom{6}{2} - \text{dim degree-4
      forms vanishing on } \Gamma^{\prime \prime}    \right] \\
= \; \; & \text{dim degree-4
      forms vanishing on } \Gamma^{\prime \prime} - 3. 
\end{aligned}
$$
 Now any linear form
times the equation of the cubic $X$ gives a degree-4 form vanishing on
$\Gamma''$ so the degree-4 forms vanishing on $\Gamma''$ is a vector
space of dimension at least 3. However, the defining ideal of the four
red lines also vanishes on $\Gamma''$, so in fact the dimension is at
least 4. It follows that the failure is at least 1 so the four points
in $\Gamma'$ are collinear.  So the construction produces
all elliptic curves and is dense in degree 3.
\end{proof}



To establish that the construction is dense in degrees 4 and 5, we use Terracini's
beautiful lemma about secant varieties {\cite[Lemma 3.1]{CCG}} (stated below in a restricted
form, though it holds for higher secant varieties too). If $X$ is a subvariety of
$\P^n$ and $p_1 \neq p_2$ are two points on $X$ then the line joining
$p_1$ to $p_2$ is a secant line to $X$. The secant line variety $\text{Sec}(X)$ is the Zariski-closure
of the variety of points $q \in \P^n$ that lie on a secant line to
$X$. 

\begin{lemma}[Terracini's Lemma] 
  Let $p$ be a generic point on $\text{Sec}(X) \subset \P^n$, lying on
  the secant line joining the two points $p_1 \neq p_2$ of $X$. Then
  $T_p(\text{Sec}(X))$, the (projectivized) tangent space to
  $\text{Sec}(X)$ at $p$, is  $\langle
  T_{p_1}(X), \; T_{p_2}(X)\rangle$, the projectivization of the  linear span of the two vector
  spaces $T_{p_1}(X)$ and $T_{p_2}(X)$. In particular,
$$ \text{dim}\;  \text{Sec}(X)  = \text{dim}\;  \langle T_{p_1}(X), \; T_{p_2}(X)\rangle.$$
\end{lemma}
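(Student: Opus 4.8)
The plan is to realize $\text{Sec}(X)$ as the closure of the image of a simple addition map and then differentiate. Passing to affine cones in $\C^{n+1}$, let $\hat{X}$ denote the cone over $X$ and consider the addition map
$$ \sigma \colon \hat{X} \times \hat{X} \to \C^{n+1}, \qquad \sigma(v_1, v_2) = v_1 + v_2. $$
Because $\hat{X}$ is a cone, $\sigma(s v_1, t v_2) = s v_1 + t v_2$ runs over every point of every secant line as $s$ and $t$ vary, so the Zariski closure of the image of $\sigma$ is exactly the cone $\widehat{\text{Sec}(X)}$ over $\text{Sec}(X)$. Choosing the generic secant point $p$ to be the image of a point $v = v_1 + v_2$ with $(v_1, v_2)$ a generic smooth point of $\hat{X} \times \hat{X}$, the lemma reduces to identifying the tangent space to $\widehat{\text{Sec}(X)}$ at $v$.

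First I would compute the differential. Since $\sigma$ is the restriction of a linear map, its differential at $(v_1, v_2)$ is again addition,
$$ d\sigma_{(v_1, v_2)}(\xi, \eta) = \xi + \eta, \qquad \xi \in T_{v_1}\hat{X}, \ \eta \in T_{v_2}\hat{X}, $$
so the image of $d\sigma$ is the linear subspace $T_{v_1}\hat{X} + T_{v_2}\hat{X} \subseteq \C^{n+1}$. The inclusion $\mathrm{im}(d\sigma) \subseteq T_v \widehat{\text{Sec}(X)}$ always holds, because $\sigma$ factors through $\widehat{\text{Sec}(X)}$; the content of the lemma is the reverse inclusion, which I would obtain from a dimension count. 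Here lies the main obstacle, and the only place characteristic zero is used: \emph{generic smoothness}. For the dominant map $\sigma$ onto $\widehat{\text{Sec}(X)}$ over $\C$, the rank of $d\sigma$ at a generic point of $\hat{X} \times \hat{X}$ equals $\dim \widehat{\text{Sec}(X)}$, so $\mathrm{im}(d\sigma)$ is a subspace of $T_v \widehat{\text{Sec}(X)}$ of the full dimension, forcing equality. I would take care to choose $p$ (and hence the preimage $(v_1, v_2)$) generically enough that $v$ is a smooth point of $\widehat{\text{Sec}(X)}$ lying in the genuine image and that $p_1 \neq p_2$ are honest distinct points of $X$, which is possible whenever $\text{Sec}(X)$ properly contains $X$.

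Finally I would projectivize, matching the notation of the statement. In the paper's convention $T_{p_i}(X)$ denotes the vector-space (affine) tangent space, which is exactly $T_{v_i}\hat{X}$: the tangent space to the cone $\hat{X}$ at the smooth point $v_i$, a linear subspace of $\C^{n+1}$ containing $v_i$. Thus the image $T_{v_1}\hat{X} + T_{v_2}\hat{X}$ of the differential is precisely the linear span of $T_{p_1}(X)$ and $T_{p_2}(X)$, and its projectivization is $\langle T_{p_1}(X), T_{p_2}(X)\rangle$. Combined with the previous paragraph this yields $T_p(\text{Sec}(X)) = \langle T_{p_1}(X), T_{p_2}(X)\rangle$, and the dimension formula follows at once, since a variety and its tangent space at a smooth point share the same dimension. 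I expect the persistent delicate point to be bookkeeping the passage between cones and their projectivizations, making sure that \emph{generic} is used consistently so that $v$ is a smooth point of the image and $d\sigma$ achieves its maximal rank there.
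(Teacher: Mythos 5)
Your proof is correct, but there is nothing in the paper to compare it against: the paper imports Terracini's Lemma from Carlini--Chiantini--Geramita \cite[Lemma 3.1]{CCG} and uses it as a black box, offering no proof of its own. What you have written is the standard characteristic-zero argument: realize the cone over $\text{Sec}(X)$ as the closure of the image of the addition map $\sigma(v_1,v_2)=v_1+v_2$ on $\hat{X}\times\hat{X}$, observe that $d\sigma$ is again addition (since $\sigma$ is the restriction of a linear map) so that $\mathrm{im}(d\sigma)=T_{v_1}\hat{X}+T_{v_2}\hat{X}$, and invoke generic smoothness of the dominant map $\sigma$ to force this image to fill out $T_v\widehat{\text{Sec}(X)}$ at a general point. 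You flag exactly the right delicate points: the inclusion $\mathrm{im}(d\sigma)\subseteq T_v\widehat{\text{Sec}(X)}$ is automatic, the reverse inclusion is where characteristic zero enters (and where the lemma genuinely fails in positive characteristic), and the genericity hypotheses must be threaded consistently. One bookkeeping step worth making fully explicit: generic smoothness gives a dense open $U\subseteq\hat{X}\times\hat{X}$ on which $d\sigma$ attains maximal rank, and by Chevalley's theorem $\sigma(U)$ contains a dense open subset of the cone over $\text{Sec}(X)$, so a generic $p$ does admit a representation $v=v_1+v_2$ with $(v_1,v_2)\in U$ --- the conclusion then holds for that choice of $p_1,p_2$, which matches how the lemma is phrased. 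Your identification of the paper's $T_{p_i}(X)$ with the affine tangent space to the cone is also the intended reading, consistent with the paper's computation of $T_p(\mathbb{X}_{1^d})$ as the projectivization of the linear space $(I_p)_d$.
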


We apply this in the setting where $X = \mathbb{X}_{1^5}$, the
variety of completely reducible forms of degree
$5$ on $\P^2$. Letting $S = \C[x,y,z] = \oplus_{d\geq 0} S_d$, $\mathbb{X}_{1^5}$ is a
subvariety of the parameter space $\P(S_5)$ of all degree-$5$ curves: 
$$\mathbb{X}_{1^5} = \{ [F_1\cdots F_5]: \; \text{each} \; F_i \in S_1\}.$$ 
Fortunately, Carlini, Chiantini and Geramita recently
described the tangent space to $\mathbb{X}_{1^d}$. 

\begin{lemma}[{\cite[Proposition 3.2]{CCG}}]
The tangent space to a point  $p = [F_1\cdots F_d]$ in $\mathbb{X}_{1^d} \subset
\P(S_d)$ is the projectivization of the degree-$d$ part of the ideal  $$I_p = \langle
G_1, \ldots, G_d \rangle,$$ where $G_i = (F_1F_2\ldots F_d)/F_i$. 
\end{lemma}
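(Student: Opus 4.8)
The plan is to realize $\mathbb{X}_{1^d}$ as the image of an explicit multilinear parameterization and then read off the tangent space by differentiating, which is the standard technique for tangent spaces to varieties of reducible forms. Consider the multiplication map $\phi \colon (S_1)^d \to S_d$ sending $(F_1,\ldots,F_d) \mapsto F_1\cdots F_d$; its image is the affine cone $\widehat{\mathbb{X}}_{1^d}$ over $\mathbb{X}_{1^d}$, since by definition a point of $\mathbb{X}_{1^d}$ is a scalar multiple of a product of $d$ linear forms. To probe the tangent space at $p = [F_1\cdots F_d]$, I would deform each factor along a path $F_i(t)$ with $F_i(0)=F_i$ and $F_i'(0)=H_i$, where $H_i$ ranges over all of $S_1$ (the tangent space to the linear space $S_1$ at any point is $S_1$ itself).

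Differentiating the product by the Leibniz rule at $t=0$ gives
\[ \frac{d}{dt}\Big|_{t=0} F_1(t)\cdots F_d(t) = \sum_{i=1}^{d} H_i \prod_{j\neq i} F_j = \sum_{i=1}^{d} H_i G_i, \]
so the image of the differential $d\phi$ at $(F_1,\ldots,F_d)$ is exactly $\{\sum_i H_i G_i : H_i \in S_1\}$. Because each $G_i$ has degree $d-1$, this set equals $S_1 G_1 + \cdots + S_1 G_d$, which is precisely the degree-$d$ graded piece $(I_p)_d$ of the ideal $I_p = \langle G_1,\ldots,G_d\rangle$. Note that $F_1\cdots F_d = \tfrac{1}{d}\sum_i F_i G_i$ lies in this space, so the radial (Euler) direction is automatically included and passing to projectivizations is harmless. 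Since $\phi$ factors through $\widehat{\mathbb{X}}_{1^d}$, its differential maps into the embedded tangent space, giving the containment $\P\big((I_p)_d\big) \subseteq T_p(\mathbb{X}_{1^d})$ for free.

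The main obstacle is upgrading this containment to the claimed equality. As we work over $\C$, generic smoothness applies, so at a generic $p$ (distinct $F_i$ in general position) the image of $d\phi$ fills the entire tangent space, and it suffices to match dimensions. On one side, the parameterization $(\P(S_1))^d \to \P(S_d)$ is generically finite, because a product of distinct linear forms recovers its factors up to order and scale, whence $\dim \mathbb{X}_{1^d} = 2d$. On the other side, the kernel of $d\phi$ is the space of linear syzygies $\sum_i H_i G_i = 0$ with $H_i \in S_1$: the relations $H_i = \lambda_i F_i$ with $\sum_i \lambda_i = 0$ contribute a $(d-1)$-dimensional space, and for generic $F_i$ I expect these to be the only ones, giving
\[ \dim (I_p)_d = 3d-(d-1) = 2d+1, \qquad \dim \P\big((I_p)_d\big) = 2d = \dim \mathbb{X}_{1^d}. \]
Matching dimensions with the containment above forces equality at generic $p$, which is exactly what the application of Terracini's Lemma requires. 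The one step needing genuine care is verifying that there are no extra linear syzygies among the $G_i$ for generic linear forms; this can be handled by a direct genericity argument or, more cleanly, by invoking the independently computed dimension of $\mathbb{X}_{1^d}$ to pin down the kernel dimension.
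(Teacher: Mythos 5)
The first thing to note is that the paper does not prove this lemma at all: it is quoted from Carlini--Chiantini--Geramita \cite[Proposition 3.2]{CCG}, and the only related argument appearing in the paper is the downstream computation that $\dim (I_p)_5 = 11$ for distinct forms. Your proof is the standard route (and, in essence, the one in the cited source): parameterize the cone over $\mathbb{X}_{1^d}$ by the multiplication map $(S_1)^d \to S_d$, differentiate by the Leibniz rule to see that the image of $d\phi$ at $(F_1,\ldots,F_d)$ is $S_1G_1 + \cdots + S_1G_d = (I_p)_d$, and observe that the Euler direction $F_1\cdots F_d = \frac{1}{d}\sum_i F_iG_i$ lies inside, so projectivizing is harmless. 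That much is complete and correct, as is your dimension bookkeeping ($\dim \mathbb{X}_{1^d} = 2d$ by generic finiteness of the parameterization, via unique factorization).

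Two comments on the step you flag as needing care. First, the kernel computation needs no genericity and no appeal to the dimension of $\mathbb{X}_{1^d}$: for \emph{any} pairwise non-proportional linear forms, if $L_1G_1 + \cdots + L_dG_d = 0$ with $L_i \in S_1$, then $F_1$ divides each $L_jG_j$ with $j \neq 1$ (because $F_1 \mid G_j$), hence $F_1 \mid L_1G_1$; since the linear form $F_1$ is prime and does not divide $G_1$, we get $L_1 = \lambda_1 F_1$, and symmetrically $L_i = \lambda_i F_i$ with $\sum_i \lambda_i = 0$ after substituting back. This is exactly the divisibility argument the paper itself records (for $d=5$) immediately after the lemma, and it pins the kernel down to your $(d-1)$-dimensional space at \emph{every} distinct-factor point, giving $\dim (I_p)_d = 3d - (d-1) = 2d+1$ unconditionally; you should use it in place of your genericity hedge. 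Second, the one genuine weakness: your closing move via generic smoothness proves the equality $T_p(\mathbb{X}_{1^d}) = \P\bigl((I_p)_d\bigr)$ only at a \emph{generic} $p$, while the lemma asserts it at an arbitrary point. Knowing that the image of $d\phi$ has dimension equal to $\dim \mathbb{X}_{1^d}$ does not by itself force equality with the Zariski tangent space at a possibly singular point --- compare the normalization of a nodal cubic, where the differential has full rank at a preimage of the node yet the tangent space at the node is two-dimensional. Since the paper only ever invokes the lemma through Terracini's Lemma at generic points, your generic statement suffices for every application made of it; but to obtain the lemma as stated you would need to argue additionally that points with $d$ distinct factors are smooth on $\mathbb{X}_{1^d}$ (for instance: near such a point the $d!$ sheets of the parameterization have the same image germ, and the rank of $d\phi$ is constant there by the divisibility argument, so the constant-rank theorem exhibits the image germ as smooth of dimension $2d$), or else restrict the statement to generic $p$ explicitly.
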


If the forms $F_1, \ldots, F_5$ are distinct then $(I_p)_5$ has
dimension $11$. To see this, note that $\text{dim}\;
(I_p)_5 = (\text{dim}S_1)(\text{dim} (I_p)_{4}) - \text{dim}
(\text{Syz}(G_1,\ldots, G_5)_1) = 3(5) - \text{dim}
(\text{Syz}(G_1,\ldots, G_5)_1)$, where $$\text{Syz}(G_1,\ldots,
G_5)_1 = \{ (L_1,\ldots,L_5) \in (S_1)^5: \; L_1G_1 + \cdots +  L_5G_5 =
0  \} $$ 
is the degree-$1$ part of the syzygy module. However, if $L_1G_1 +
\cdots + L_5G_5 = 0$ then $L_1G_1 = -(L_2G_2+ \cdots +L_5G_5)$. Since
$F_1$ divides each of the terms on the right-hand side of this
equality, $F_1$ must divide $L_1G_1$. But $F_1$ does not divide $G_1$ so $F_1 |
L_1$. Similarly, $F_i | L_i$ for $i = 1, \ldots, 5$. It follows that
the only degree-$1$ syzygies are generated by the $4$ linearly
independent syzygies $F_1e_1 - F_ie_i$ ($i=2, \ldots, 5$). As a
result, $T_p(X_{1^5})$ is the projectivization of an $11$-dimensional
vector space. 

Terracini's Lemma shows that if $p$ is a generic point on the line
between $p_1$ and $p_2$, 
$T_{p}(\text{Sec} \mathbb{X}_{1^5})$ is the projectivization
of the linear span of the vector spaces $(I_{p_1})_5$ and $(I_{p_2})_5
$.  This span has dimension $2(11) - \text{dim}\; (I_{p_1} \cap
I_{p_2})_5$. If $I$ is an ideal in $S$, let $\V(I)$ denote the set of points $P
\in \P^2$ such that $F(P)=0$ for all $F \in I$. Now if $p_1 = [F_{11}\cdots F_{15}]$ and $p_2 = [F_{21}
\cdots F_{25}]$ and if all the lines $\V(F_{ij})$ 
are distinct, then $\mathbb{V}(I_{p_1} \cap I_{p_2}) =
\mathbb{V}(I_{p_1}) \cup \mathbb{V}(I_{p_2})$ is a collection of 20
points (counted with multiplicities): 10 of the points are given
by the intersections of the $\binom{5}{2}$ pairs of lines
$F_{1i}(x,y,z)=0$ and 10 of the points are given
by the intersections of the $\binom{5}{2}$ pairs of lines
$F_{2i}(x,y,z)=0$. A polynomial in $(I_{p_1} \cap I_{p_2})_5$ is a curve that goes through these 20 points. If the
20 points were in general position, we would expect only 1 curve to go
through all 20 points and so $\text{dim}\; (I_{p_1} \cap I_{p_2})_5 =
1$. However, the 20 points are in special position -- for example,
many collections of four of the points are collinear -- so we cannot
trust our intuition blindly. Note that an element $H$ of $(I_{p_1} \cap
I_{p_2})_5$ corresponds to a solution to a system of equations $A{\bf
  v} = {\bf 0}$ where $A$ is a 20 $\times$ 21 matrix whose columns
correspond to the monomials of $S_5$ and whose rows correspond to the
20 points in $\V(I_{p_1}) \cup \V(I_{p_2})$. The 21 entries of a
solution ${\bf v}$ are the coefficients of $H(x,y,z)$. The entries of
$A$ in the column corresponding to a given monomial are
obtained by plugging in the coordinates of a point into the
monomial. The entries of the point at the intersection of $F_{ij}(x,y,z)=a_{ij}x +
b_{ij}y + c_{ij}z = 0$ and $F_{ik}(x,y,z) = a_{ik}x +
b_{ik}y + c_{ik}z = 0$ are given by the cross product $\langle a_{ij},
b_{ij}, c_{ij} \rangle \times \langle a_{ik}, b_{ik}, c_{ik}
\rangle$. So the entries in the matrix $A$ are degree-$10$ polynomials
in the coefficients of the $F_{ij}$. This matrix will have full rank
unless all the $20\times 20$ minors are zero. This means that the
matrix has full rank off the closed set where all the maximal minors
vanish. So the matrix has full rank (and dim $ (I_{p_1} \cap
I_{p_2})_5 = 1$) on an open set. To show that this open set is dense
we just need to show that it is nonempty by exhibiting an example. 

The {\tt hilbertFunction} command in Macaulay2
\cite{M2} can be used to compute the dimension of $(I_{p_1}\cap
I_{p_2})_5$. Checking a randomly selected example shows that
$\text{dim}\; (I_{p_1} \cap I_{p_2})_5 = 1$ generically and so for a
generic point $p$ of $\text{Sec} (X_{1^5})$, we see that the tangent space at
$p$ is the projectivization of a $21$-dimensional space; that is,
$\text{Sec} (X_{1^5}) \subset \P^{20}$ is a $20$-dimensional projective
variety and so $\text{Sec} (X_{1^5}) = \P^{20}$. We're now ready to tackle
the constructibility question for curves of degrees 4 and 5.  

\begin{theorem} The construction is dense for curves of degree 4. \label{construct4}
\end{theorem}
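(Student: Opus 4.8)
The plan is to translate constructibility of a quartic into a statement about secant lines of $\mathbb{X}_{1^5}$ and then invoke the fact, just established, that $\text{Sec}(\mathbb{X}_{1^5}) = \P^{20}$. First I would set up the dictionary between arrangements and secant lines. Given five red lines cut out by $\ell_1, \ldots, \ell_5$ and five blue lines cut out by $L_1, \ldots, L_5$ meeting a green line $G$ in five triple points, put $R = \ell_1 \cdots \ell_5$ and $B = L_1 \cdots L_5$, two points of $\mathbb{X}_{1^5} \subset \P(S_5)$. Their $25$ intersection points split into the five diagonal points $\ell_i \cap L_i$ on $G$ and the twenty off-diagonal points $\ell_i \cap L_j$ ($i \neq j$) lying on the residual quartic $X$ produced by Theorem \ref{construction}. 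The quintic $XG$ vanishes at all $25$ points, and a Cayley--Bacharach count (Theorem \ref{CB} with $\Gamma' = \emptyset$, $d_1 = d_2 = 5$, $s = 7$, $d = 5$) shows that the quintics through these $25$ points form exactly the pencil $\langle R, B \rangle$. Hence $XG = \lambda R + \mu B$ lies on the secant line joining $R$ and $B$.

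The observation that makes $\mathbb{X}_{1^5}$ (rather than $\mathbb{X}_{1^6}$) the right object is that the green-line hypothesis comes for free in the reverse direction. If $XG = \lambda R + \mu B$ with $R, B \in \mathbb{X}_{1^5}$ products of ten distinct linear forms, then restricting to $G$ yields $\lambda R|_G = -\mu B|_G$ as forms on $G \cong \P^1$, which forces the red lines and the blue lines to meet $G$ in a common set of five points. These are exactly the triple points, so the diagonal points are automatically collinear, while the twenty off-diagonal intersection points, lying off $G$, must satisfy $X = 0$. Thus a quartic $X$ is constructible precisely when $XG$ lies on an honest secant line of $\mathbb{X}_{1^5}$ (joining two distinct, sufficiently generic points) for some line $G$.

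Now I would feed in $\text{Sec}(\mathbb{X}_{1^5}) = \P^{20}$: every quintic, and in particular every product $XG$, lies in the secant variety. Viewing the construction as a rational map $c$ from the $17$-dimensional space of arrangements to $\P(S_4) = \P^{14}$, density of the construction is exactly dominance of $c$. The main obstacle, which I expect to be the crux, is that $\text{Sec}(\mathbb{X}_{1^5}) = \P^{20}$ only guarantees that a \emph{generic} quintic sits on an honest secant line, whereas the quintics $XG$ sweep out the proper subvariety $W = \{ \text{linear form} \cdot \text{quartic}\}$ of dimension $16 < 20$, which could a priori meet $\mathbb{X}_{1^5}$ only in degenerate (tangent or coincident) secants. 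To dispose of this I would exhibit a single explicit arrangement of five red and five blue lines through five collinear triple points whose residual quartic $X$ is honest, verifying that $XG$ genuinely decomposes as $\lambda R + \mu B$ with the ten lines distinct, via a Macaulay2 computation in the spirit of the $\dim(I_{p_1} \cap I_{p_2})_5 = 1$ check used above. This certifies that the differential of $c$ attains full rank $14$ at one point; upper-semicontinuity of the rank then gives that $c$ is dominant, so its image contains a dense open subset of $\P^{14}$ and the construction is dense for curves of degree $4$.
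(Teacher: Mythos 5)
Your reduction is, in substance, the same as the paper's: the paper likewise identifies constructibility of a quartic $F$ with a decomposition $FL = \lambda p_1 + \mu p_2$ for $p_1, p_2 \in \mathbb{X}_{1^5}$ with the ten lines distinct, and likewise leans on $\text{Sec}(\mathbb{X}_{1^5}) = \P^{20}$. (The paper distributes the $25$ intersection points by B\'ezout, using irreducibility of $F$ to keep $\V(F)$ from sharing a line with $p_1$; your restriction-to-$G$ argument is a pleasant alternative, though it silently needs $\lambda\mu \neq 0$ and needs the five common zeros of $R|_G$ and $B|_G$ to be distinct, conditions that must be folded into the bad locus along with line-distinctness.) Your Cayley--Bacharach count that the quintics through the $25$ points are exactly the pencil $\langle R, B \rangle$ is correct, and you put your finger on the right crux: $\text{Sec}(\mathbb{X}_{1^5}) = \P^{20}$ does not by itself prevent the $16$-dimensional locus of quintics of the form $LQ$ from hiding inside the $19$-dimensional set of quintics admitting only degenerate decompositions.

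The genuine gap is in how you dispose of that crux. Exhibiting one arrangement whose residual quartic satisfies $XG = \lambda R + \mu B$ with ten distinct lines certifies nothing, because by Theorem \ref{construction} together with your own pencil count \emph{every} sufficiently generic arrangement has this property automatically; a check that holds at every point of the source cannot distinguish a dominant construction map from a non-dominant one. In particular it does not ``certify that the differential of $c$ attains full rank $14$'': a point lying in the image carries no information about the rank of $dc$ there (and rank of the differential is lower, not upper, semicontinuous -- full rank propagates from a point to a neighborhood, but nothing here establishes it at any point). To salvage your route you would have to compute the actual Jacobian of $c$ at an explicit arrangement, which you never propose. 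The paper closes the argument with no Jacobian at all: the bad locus $W$ sits inside a proper closed subset of $\P(S_5)$, so for a fixed linear form $L$ the condition $FL \notin W$ is Zariski-open on $\P(S_4)$, and every irreducible $F$ satisfying it is constructible by the dictionary; the explicit example (an arrangement together with an irreducible quartic through its $20$ off-green intersection points) serves only to show this open set is nonempty, hence dense in $\P^{14}$, and intersecting with the dense open set of irreducible quartics finishes the proof. Your example should be repurposed to play exactly that role -- as a witness of nonemptiness of the pulled-back open condition, not as a rank certificate.
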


\begin{proof} We'll show that there is a dense open subset of
  constructible {\em irreducible} curves of degree 4. The irreducible
  curves of degree 4 are themselves dense and open in the set of all
  curves of degree 4; see Shafarevich's book \cite[Section 5.2]{Shaf1}
  for details. First we note that the set of degree-$5$ forms $Z \in
  \P(S_5)$ such that there exist $p_1 \in X_{1^5}$ and $p_2 \in
  X_{1^5}$ so that $Z$ is a linear combination of $p_1$ and $p_2$ and
  the 10 lines in $\V(p_1) \cup \V(p_2)$ are {\em not} distinct is
  contained in a 19-dimensional subvariety $W$ of $\P(S_5)$. The
  dimension count is easy: there are two parameters for each of the 9
  (possibly) distinct lines determining $p_1$ and $p_2$ and 1
  parameter to reflect where $Z$ lies on the line joining $p_1$ and
  $p_2$. Now fix a linear form $L$ and consider the map $\phi_L:
  \P(S_4) \rightarrow \P(S_5)$ given by multiplication by $L$. The
  inverse image $\phi_L^{-1}(W^c)$ of the complement of $W$ is open in
  $\P(S_4)$.  Given an {\em irreducible} degree-4 form $F$ in this
  open set, there exist $p_1$ and $p_2$ in $X_{1^5}$ so that $FL$ is a
  linear combination of $p_1$ and $p_2$ and the 10 lines in $p_1$ and
  $p_2$ are distinct. Then $\V(FL)$ contains the 25 points of
  intersection between the lines in $p_1$ and the lines in $p_2$. We
  claim that 5 of the 25 points in $\V(p_1) \cap \V(p_2)$ lie on
  $\V(L)$ and the remaining 20 points lie on $\V(F)$. If more than 5
  points lie on $\V(L)$ then B\'ezout's Theorem shows that $L$ must
  divide $p_1$. Similarly, $L$ must divide $p_2$. This is impossible
  because the 10 lines in $\V(p_1)$ and $\V(p_2)$ are distinct.
  Similarly, if $\V(F)$ goes through more than 20 points of $\V(p_1)
  \cap \V(p_2)$ then $F$ and $p_1$ must have a nontrivial common
  divisor. But $F$ is irreducible so this cannot occur. It follows
  that $\V(F)$ is constructible (the red lines are the lines in $\V(p_1)$, the
  blue lines are the lines in $\V(p_2)$ and the green line is the line
  $\V(L)$). 

It follows that an open set of irreducible degree-$4$ curves is
constructible. We give an example to show that this open set is nonempty.
Take the green line to be $y=0$, the red lines
to be $x+2z=0$, $x+z=0$, $x=0$, $x-z=0$, and $x-2z=0$, and the blue
lines to be $x-y+2z=0$, $x-y+z=0$, $x-y=0$, $x-y-z=0$, and
$x-y-2z=0$. The red lines intersect the blue lines in 20 distinct
points off the green line and the polynomial 
$$        	
5 x^{4} - 10 x^{3} y + 10 x^{2} y^{2} - 5 x y^{3} + y^{4} - 15 x^{2} +
15 x y - 5 y^{2} + 4$$
vanishes on each of the 20 points. You can, for example, dehomogenize
the polynomial (set $z =1$) and use Maple's
{\tt evala(AFactor($\cdot$))} command to check that the polynomial
is irreducible. 
\end{proof}

\begin{theorem} The construction is dense for curves of degree 5. 
\label{construct5}
\end{theorem}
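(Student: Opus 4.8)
The plan is to mirror the degree-4 argument (Theorem \ref{construct4}) while confronting the essential new difficulty that $\text{Sec}(\mathbb{X}_{1^6})$ no longer fills its ambient space $\P(S_6)=\P^{27}$. First I would record the right reformulation: a degree-5 form $F$ is constructible exactly when there is a green line $\V(L)$ with $LF = \alpha p_1 + \beta p_2$, where $p_1 = [\ell_1\cdots\ell_6]$ and $p_2 = [L_1\cdots L_6]$ lie in $\mathbb{X}_{1^6}\subset\P(S_6)$. Restricting to $\V(L)\cong\P^1$, divisibility of $\alpha p_1+\beta p_2$ by $L$ says exactly that the two binary sextics $p_1|_{\V(L)}$ and $p_2|_{\V(L)}$ are proportional, i.e. that the six red and six blue lines meet $\V(L)$ in the same six points --- precisely the matched-pair condition of the construction. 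Thus $F$ is constructible iff $LF\in\text{Sec}(\mathbb{X}_{1^6})$ for some line $L$, and the thirty off-green intersection points land on $\V(F)$ by the now-familiar B\'ezout argument (at most six of the thirty-six red--blue intersections lie on $\V(L)$, else $L\mid p_1$; and for irreducible $F$ at most thirty lie on $\V(F)$, else $F$ shares a factor with $p_1$).

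The next step is a parameter count. An arrangement of six red and six blue lines meeting a green line in six triple points depends on $3(5)+5 = 20$ parameters, and $\binom{d+2}{2}-1 = 20$ as well when $d=5$: indeed $3d+5 = \binom{d+2}{2}-1$ holds precisely at $d=5$, the borderline degree. So the construction defines a morphism $\Psi$ from a $20$-dimensional configuration space to $\P(S_5)=\P^{20}$, and since source and target are equidimensional, density of the constructible locus is equivalent to $\Psi$ being dominant; over $\C$ this in turn follows once the differential $d\Psi$ is surjective at a single configuration.

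I would establish dominance by a tangent-space computation built on Terracini's Lemma together with the description of $T_p\mathbb{X}_{1^6}$ in \cite[Proposition 3.2]{CCG}: at $G = \alpha p_1+\beta p_2$ the tangent space to $\text{Sec}(\mathbb{X}_{1^6})$ equals $\P\big((I_{p_1})_6 + (I_{p_2})_6\big)$. Concretely I would (i) confirm the non-defectiveness $\dim\text{Sec}(\mathbb{X}_{1^6}) = 25$, exactly as the degree-4 argument confirmed $\text{Sec}(\mathbb{X}_{1^5})=\P^{20}$, by checking $\dim (I_{p_1}\cap I_{p_2})_6 = 0$ on a random pair in Macaulay2; (ii) show that for generic $L$ the intersection $\text{Sec}(\mathbb{X}_{1^6})\cap(L\cdot S_5)$ is proper, of the expected dimension $25+20-27 = 18$; and (iii) show that the residuation map $G\mapsto G/L$ on this intersection is generically finite. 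Letting $L$ range over its $2$-parameter family then produces a $20$-dimensional, hence dense, family of constructible quintics, and restricting to the dense open locus of irreducible quintics (as in Theorem \ref{construct4}) finishes the argument. Each of (i)--(iii) reduces, by upper semicontinuity, to a single explicit configuration whose relevant Hilbert function and Jacobian rank I would compute in Macaulay2.

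The main obstacle is exactly this rank verification. Because $d=5$ is the borderline degree where the parameter count is balanced, dominance is a knife-edge condition: the thirty intersection points and the entire configuration sit in very special position (many collinear quadruples, and the forced matched pairs on $\V(L)$), so general-position heuristics cannot be trusted and nothing short of an explicit computation will certify that $d\Psi$ attains full rank $20$ rather than dropping rank. Equivalently, should $\text{Sec}(\mathbb{X}_{1^6})$ turn out to be defective, or the intersection in (ii) improper, the construction would fail to be dense; so producing one sufficiently generic configuration --- distinct lines, six honest triple points on the green line, and thirty distinct off-green points --- with nonsingular Jacobian is the crux, precisely paralleling the role of the worked example at the end of the proof of Theorem \ref{construct4}.
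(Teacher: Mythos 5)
Your step (ii) is where the argument breaks, and it breaks in an instructive way: the intersection $\Sec(X_{1^6}) \cap (L\cdot S_5)$ is \emph{not} proper of dimension $18$ --- it is all of $L \cdot S_5 \cong \P^{20}$. This follows from the very fact you cite in (i): since $\Sec(X_{1^5}) = \P(S_5) = \P^{20}$, any quintic $F$ can be written (generically on an honest secant) as $F = \alpha p_1 + \beta p_2$ with $p_1, p_2 \in X_{1^5}$, whence $LF = \alpha(Lp_1) + \beta(Lp_2)$ with $Lp_1, Lp_2 \in X_{1^6}$, so $LF$ lies on a secant line of $X_{1^6}$ for \emph{every} $F$ and every $L$. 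This is exactly the paper's opening observation that $X_{1,V} \subseteq \Sec(X_{1^6})$, and it collapses your entire dominance program: there is no expected-dimension-$18$ intersection to verify, the residuation map in (iii) is trivially an isomorphism (division by $L$ on $L\cdot S_5$), and no knife-edge Jacobian rank certification is needed at the borderline degree where $3d+5 = \binom{d+2}{2}-1$. Worse, your closing hedge --- that improperness of the intersection in (ii) would mean density fails --- is exactly backwards: the improperness \emph{is} the theorem. Had you run the computation you propose, found the intersection improper, and followed your stated logic, you would have wrongly concluded non-density.

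What genuinely remains after the containment is (a) upgrading membership in $\Sec(X_{1^6})$, which is defined as a closure, to an honest secant representation in which the $12$ lines of $\V(p_1)\cup\V(p_2)$ are distinct --- your matched-pair observation via proportional binary sextics on $\V(L)$ correctly explains why this distinctness recovers the six triple points of the construction --- and (b) the B\'ezout distribution of the $36$ points ($6$ on $\V(L)$, $30$ on $\V(F)$ for $F$ irreducible), which you have right. The paper handles (a) by bounding the bad locus $W$ of points on secants spanned by pairs with repeated lines by dimension $23$, checking $\dim \Sec(X_{1^6}) = 25$ (your (i) is correct and is still needed, but only for this comparison, not for any transversality statement), and exhibiting one explicit $12$-line configuration with an irreducible quintic through the $30$ points to show the good open set in the irreducible locus $V$ is nonempty; density follows since irreducible quintics are open in $\P(S_5)$. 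Your plan could in principle be salvaged as an alternative computational proof --- certify rank $20$ of $d\Psi$ at a single explicit configuration --- but as written it rests on a properness claim that is false and misreads what its failure would mean, while missing the one structural fact ($\Sec(X_{1^5}) = \P(S_5)$ pushed forward by multiplication by $L$) that makes the paper's proof short.
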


\begin{proof}
First we note that $X_{1,V}$, the subvariety of $\P(S_6)$ consisting
of degree 6 forms that factor into a linear form times an irreducible
degree-$5$ form, is in fact a subvariety of $\Sec(X_{1^6})$. If $L$ is a
linear form and $Q$ is an irreducible degree-$5$ form then $Q \in
\P(S_5) = \Sec(X_{1^5})$ so there are completely reducible forms $p_1$
and $p_2$ of degree $5$ so that $Q$ is a linear combination of $p_1$
and $p_2$. It follows that the form $LQ \in X_{1,V}$ is a linear
combination of $Lp_1$ and $Lp_2$ so $X_{1,V} \subseteq \Sec(X_{1^6})$. 
Moreover, $X_{1,V}$ is a closed set in $\P(S_6)$ since it is the image
of the regular map $\P(S_1) \times V \rightarrow \P(S_6)$, where the
map is given by multiplication and $V$ is projectivization of the
irreducible forms of degree 5. This shows that $X_{1,V}$ is a subvariety of
$\Sec(X_{1^6})$. 

Now pick $Z \in V$ constructible so that $L$ is the defining equation
of the green line (for some set of distinct red and blue lines). For
example, we can take the green line to be $y=0$, the red lines
to be $x+3z=0$, $x+2z=0$, $x+z=0$, $x=0$, $x-z=0$ and $x-2z=0$, and the
blue lines to be $x-y+3z=0$, $x-y+2z=0$, $x-y+z=0$, $5x-y=0$,
$5x-y-5z=0$ and $5x-y-10z=0$. The red lines intersect the blue lines
in 30 distinct points off the green line and the irreducible
polynomial $$\begin{aligned} 450 x^{5} & - 615 x^{4} y + 396 x^{3} y^{2} - 123 x^{2} y^{3} + 18 x y^{4}
- y^{5} + 675 x^{4} z - 150 x^{3} y z \\ & - 234 x^{2} y^{2} z 
 + 93 x y^{3} z
- 9 y^{4} z - 2400 x^{3} z^{2} + 2250 x^{2} y z^{2} - 504 x y^{2} z^{2}
\\ & + 29 y^{3} z^{2} - 2025 x^{2} z^{3} - 375 x y z^{3} + 141 y^{2} z^{3} +
2400 x z^{4} - 460 y z^{4} + 300 z^{5} \end{aligned}$$ 
vanishes on each of the 30 points. Fixing $L$,
consider the map $\phi_L: V \rightarrow \Sec(X_{1^6})$ given by
sending an irreducible degree-5 form $F$ to $FL \in X_{1,V} \subset
\Sec(X_{1^6})$. The set of points in $\Sec(X_{1^6})$ that lie on the
line connecting completely reducible forms $p_1$ and $p_2$ where the
12 lines forming $\V(p_1) \cup \V(p_2)$ are {\em not} distinct is a
closed set $W$ of dimension no larger than 23. We leave it to the
reader to check that $\Sec(X_{1^6})$ has dimension 25; the proof is
similar to the argument given above that $\Sec(X_{1^5})$ has dimension
20. It follows that the inverse image $\phi_L^{-1}(W^c)$ of the
complement of $W$ is open in $V$. Since $\phi_L(Z) \not\in W$, the
open set is nonempty. Now if $F$ is a degree-5 irreducible form with $FL
\not\in W$,  there exist $p_1$ and $p_2$ in $X_{1^6}$ so
  that $FL$ is a linear combination of
  $p_1$ and $p_2$ and the 12 lines in $p_1$ and $p_2$ are
  distinct. Then $\V(FL)$ contains the 36 points of intersection
  between the lines in $p_1$ and the lines in $p_2$. Now, as in the
  proof of Theorem \ref{construct4}, B\'ezout's Theorem shows that 6
  of the 36 points in $\V(p_1) \cap \V(p_2)$ lie on $\V(L)$ and the
  remaining 30 points lie on $\V(F)$. This allows us to use the lines
  in $V(p_1)$ as our red lines, the lines in $\V(p_2)$ as the blue
  lines and the line $\V(L)$ as our green line to construct the curve
  $\V(F)$. 

We've shown that a nonempty open subset of the irreducible degree-5
curves consists of constructible curves. The result follows because
the collection of irreducible degree-5 curves form an open set in the
parameter space $\P(S_5)$. 
\end{proof}

We have not provided an example of a curve of degree less than 6 that
is {\em not} constructible. It may be that the set of constructible
curves is Zariski-closed. In this case, {\em every} curve of degree less
than 6 would be constructible because projective spaces are connected
in the Zariski-topology: the only sets in projective space that are
both open and closed are the empty set and the whole space.  

\end{section}
\begin{section}{Further Reading and Exercises}
\label{Section:Fun}

Pappus's Theorem inspired a lot of amazing mathematics. The first
chapter of a fascinating new book by Richter-Gebert \cite{RG} describes the connections between
Pappus's Theorem and many areas of mathematics, including cross-ratios
and the Grassmann-Pl\"ucker relations among determinants. 

The history and implications of the Cayley-Bacharach Theorem is
carefully considered in Eisenbud, Green and Harris's amazing survey
paper \cite{EGH}. They connect the result to a host of interesting
mathematics, including the Riemann-Roch Theorem, residues and
homological algebra. Their exposition culminates in the assertion that the theorem is
equivalent to the statement that polynomial rings are Gorenstein. 

My approach to the Braikenridge-Maclaurin Theorem was inspired by
thinking about hyperplane arrangements. A good introduction to these
objects from an algebraic and topological viewpoint is the book by
Orlik and Terao \cite{TO}. For a more combinatorial viewpoint, see
Stanley's lecture notes \cite{Stanley}.

One way to view what we've done is to note that if $\Gamma$ is a
complete intersection -- a codimension $d$ variety (or, more generally,
scheme) defined by the vanishing of $d$ polynomials -- and $\Gamma$ is
made up of two subvarieties, then special properties of one subvariety
are reflected in special properties of the other subvariety. This
point of view leads to the beautiful subject of liaison theory. The last chapter of
Eisenbud \cite{Eisenbrick} introduces this advanced topic in
Commutative Algebra; more details can be
found in Migliore and Nagel's notes \cite{MN}. 


\begin{exercise} The following exercises are roughly in order of
  increasing difficulty. \label{ex:fun}

\begin{enumerate}

 \item Pascal's Theorem says that if a regular hexagon is inscribed in
  a circle then the 3 pairs of opposite edges lie on lines that
  intersect in 3 collinear points. Which line do the three points lie
  on? Is it surprising that it doesn't matter where in the plane the circle is
  centered? 

\item When working with lines in $\P^2$ it is desirable to have a
  quick way to compute their intersection points. Show that the lines
  $a_1x+b_1y+c_1z=0$ and $a_2x+b_2y+c_2z=0$ meet in the point
  $[a_3:b_3:c_3]$ where $$\langle a_3, b_3,c_3 \rangle = \langle a_1,
  b_1, c_1 \rangle \times \langle a_2, b_2, c_2 \rangle. $$ Interpret
  the result in terms of the geometry of 3-dimensional space. Also
  describe how to use this result to compute the intersection of two
  lines in $\R^2$. \label{ex:crossp}

\item There is an interesting duality between points and lines in
  $\P^2$. Fixing a nondegenerate inner product on 3-dimensional space, we
  define the dual line $\check{P}$ to a point $P \in \P^2$  to be the
  projectivization of the 2-dimensional subspace orthogonal to the
  1-dimensional subspace corresponding to $P$. Similarly, if $L$ is a
  line in $\P^2$, it corresponds to a 2-dimensional subspace in 3-dimensional space and
  we define the dual point $\check{L}$ to be the projectivization of the
  1-dimensional subspace orthogonal to this subspace. \\
(a) Show that a line $L$ in $\P^2$ goes through two points $P_1 \neq 
P_2$ if and only if the dual point $\check{L}$ lies on the intersection
of the two dual lines $\check{P_1}$ and $\check{P_2}$. \\
(b) Use part (a) and Exercise \ref{ex:fun}.\ref{ex:crossp} to develop
a cross product formula for the line through 2 points in
$\P^2$. Extend the formula to compute the equation for a line through
2 points in $\R^2$. \\
(c) It turns out that the duals of all the tangent lines to an
irreducible conic $C$ form a collection of points lying on a dual
irreducible conic $\check{C}$, and vice-versa (see Bachelor, Ksir and
Traves \cite{BKT} for details). Show that dualizing Pascal's Theorem
gives Brian\c{c}on's Theorem: If an irreducible conic is inscribed in
a hexagon, then the three lines joining pairs of opposite vertices
intersect at a single point\footnote{ Like B\'ezout, Charles Julien
Brian\c{c}on (1783-1864) was a professor at a French military academy.
The French military of the $19^\text{th}$ century seems to have played an interesting role
in supporting the development and teaching of mathematics.}. 

\item Provide a proof for one of the assertions in the paper: any set
  of $k$ collinear points fails to impose $k-(d+1)$ conditions on
  forms of degree $d \leq k-1$.

\item Establish the following result due to M\"obius \cite{Mobius}
  using the Cayley-Bacharach Theorem. Consider two polygons $P_1$ and
  $P_2$, each with $m$ edges, inscribed in a conic, and associate one
  edge from $P_1$ with one edge from $P_2$. Working counterclockwise
  in each polygon, associate the other edges of $P_1$ with the edges
  of $P_2$. Extending these edges to lines, M\"obius proved that if
  $m-1$ of the intersections of pairs of corresponding edges lie on a
  line then the last pair of corresponding edges also meets in a point
  on this line.

\item Establish the following result due to Katz \cite[Theorem
  3.3]{Katz}, his Mystic $2d$-Gram Theorem. If $d$ red lines and $d$
  blue lines intersect in $d^2$ points and if $2d$ of these points lie
  on an irreducible conic then there is a unique curve of degree $k-2$ through the
  other $d^2-2d$ intersection points. Katz's interesting paper \cite{Katz}
  contains several open problems.  \label{ex:katz} 

\item Use the Cayley-Bacharach Theorem to show that if two degree-5
  curves meet in 25 points, 10 of which lie on an irreducible degree-3
  curve, then there is a unique degree-4 curve through the other 15
  points. Also convince yourself that the hypotheses of this exercise can
  actually occur. 

\item If a degree-8 curve meets a degree-9 curve in 72 points and if
  17 of these points lie on an irreducible degree-3 curve, then what
  is the dimension of the family of degree-9 curves through the
  remaining 55 points? Convince yourself that the hypotheses of this
  exercise can actually occur.

\item Use the $8 \Rightarrow 9$ Theorem to show that the group law on
  an elliptic curve is associative. 

\item In general you might expect that
  if $X \subset \P^n$ then $\text{dim } \Sec(X) = \text{
    min}(2\text{dim}(X), n)$. Varieties $X$ where this inequality
  fails to hold are called {\em defective}. Check that $\Sec(X_{1^6})$ is
  not defective: it has dimension 25.

\end{enumerate}
\end{exercise} 

\end{section}

\bibliographystyle{plain}
\bibliography{workingpaper.bib}

\begin{thebibliography}{10}

\bibitem{BKT}
Andrew Bashelor, Amy Ksir, and Will Traves.
\newblock Enumerative algebraic geometry of conics.
\newblock {\em Amer. Math. Monthly}, 115(8):701--728, 2008.

\bibitem{Bezout}
Etienne B{\'e}zout.
\newblock {\em General theory of algebraic equations}.
\newblock Princeton University Press, Princeton, NJ, 2006.
\newblock Translated from the 1779 French original by Eric Feron.

\bibitem{CCG}
Enrico Carlini, Luca Chiantini, and Anthony~V. Geramita.
\newblock Complete intersections on general hypersurfaces.
\newblock {\em Michigan Math. J.}, 57:121--136, 2008.
\newblock Special volume in honor of Melvin Hochster.

\bibitem{CG}
J.H. Conway and C.~McA. Gordon.
\newblock Knots and links in spatial graphs.
\newblock {\em J. Graph Theory}, 7:445--453, 1983.

\bibitem{Coxeter}
H.~S.~M. Coxeter.
\newblock {\em Projective geometry}.
\newblock Blaisdell Publishing Co. Ginn and Co.\, New York-London-Toronto,
  1964.

\bibitem{CoxeterGreitzer}
H.~S.~M. Coxeter and S.L. Greitzer.
\newblock {\em Geometry Revisited}.
\newblock The Mathematical Association of America, Washington, D.C., 1967.
\newblock Volume 19 of the New Mathematical Library.

\bibitem{Eisenbrick}
David Eisenbud.
\newblock {\em Commutative algebra with a view toward algebraic geometry},
  volume 150 of {\em Graduate Texts in Mathematics}.
\newblock Springer-Verlag, New York, 1995.

\bibitem{EGH}
David Eisenbud, Mark Green, and Joe Harris.
\newblock Cayley-{B}acharach theorems and conjectures.
\newblock {\em Bull. Amer. Math. Soc. (N.S.)}, 33(3):295--324, 1996.

\bibitem{FultonCBMS}
William Fulton.
\newblock {\em Introduction to intersection theory in algebraic geometry},
  volume~54 of {\em CBMS Regional Conference Series in Mathematics}.
\newblock Published for the Conference Board of the Mathematical Sciences,
  Washington, DC, 1984.

\bibitem{Fulton}
William Fulton.
\newblock {\em Intersection theory}, volume~2 of {\em Ergebnisse der Mathematik
  und ihrer Grenzgebiete. 3. Folge. A Series of Modern Surveys in Mathematics
  [Results in Mathematics and Related Areas. 3rd Series. A Series of Modern
  Surveys in Mathematics]}.
\newblock Springer-Verlag, Berlin, second edition, 1998.

\bibitem{M2}
Daniel~R. Grayson and Michael~E. Stillman.
\newblock Macaulay2, a software system for research in algebraic geometry.
\newblock Available at http://www.math.uiuc.edu/Macaulay2/.

\bibitem{Heath}
Thomas Heath.
\newblock {\em A history of {G}reek mathematics. {V}ol. {II}}.
\newblock Dover Publications Inc., New York, 1981.
\newblock Corrected reprint of the 1921 original.

\bibitem{Husemoeller}
Dale Husem{\"o}ller.
\newblock {\em Elliptic curves}, volume 111 of {\em Graduate Texts in
  Mathematics}.
\newblock Springer-Verlag, New York, second edition, 2004.
\newblock With appendices by Otto Forster, Ruth Lawrence and Stefan Theisen.

\bibitem{Jones1}
Alexander Jones.
\newblock {\em Book 7 of the Collection, part 1}.
\newblock Springer-Verlag, Berlin, 1986.
\newblock Introduction, text, translation.

\bibitem{Jones2}
Alexander Jones.
\newblock {\em Book 7 of the Collection, part 2}.
\newblock Springer-Verlag, Berlin, 1986.
\newblock Commentary, index, figures.

\bibitem{Katz}
Gabriel Katz.
\newblock Curves in cages: an algebro-geometric zoo.
\newblock {\em Amer. Math. Monthly}, 113(9):777--791, 2006.

\bibitem{Kirwan}
Frances Kirwan.
\newblock {\em Complex algebraic curves}, volume~23 of {\em London Mathematical
  Society Student Texts}.
\newblock Cambridge University Press, Cambridge, 1992.

\bibitem{LandsbergBAMS}
J.~M. Landsberg.
\newblock Geometry and the complexity of matrix multiplication.
\newblock {\em Bull. Amer. Math. Soc. (N.S.)}, 45(2):247--284, 2008.

\bibitem{LandsbergPvsNP}
J.~M. Landsberg.
\newblock {$P$} versus {$NP$} and geometry.
\newblock {\em J. Symbolic Comput.}, 45(12):1369--1377, 2010.

\bibitem{Why}
Hiroshi Maehara.
\newblock Why is {${\bf P}^2$} not embeddable in {${\bf R}^3$}?
\newblock {\em Amer. Math. Monthly}, 100(9):862--864, 1993.

\bibitem{MN}
J.~C. Migliore and U.~Nagel.
\newblock Liaison and related topics: notes from the {T}orino workshop-school;
  {arXiv:math/0205161v1}.
\newblock {\em Rend. Sem. Mat. Univ. Politec. Torino}, 59(2):59--126 (2003),
  2001.

\bibitem{Mobius}
A.~F. M\"obius.
\newblock Verallgemeinerung des pascalschen theorems, das in einen kegelschnitt
  beschriebene sechseck betreffend.
\newblock {\em J. Reine Angew. Math. (Crelle's Journal)}, 36:216--220, 1848.

\bibitem{M1827}
August~Ferdinand M{\"o}bius.
\newblock {\em Der barycentrische {C}alcul}.
\newblock Georg Olms Verlag, Hildesheim, 1976.
\newblock Ein neues H{\"u}lfsmittel zur analytischen Behandlung der Geometrie,
  Nachdruck der 1827 Ausgabe.

\bibitem{AM}
Amiya Mukherjee.
\newblock Embedding complex projective spaces in {E}uclidean space.
\newblock {\em Bull. London Math. Soc.}, 13:323--324, 1981.

\bibitem{TO}
Peter Orlik and Hiroaki Terao.
\newblock {\em Arrangements of hyperplanes}, volume 300 of {\em Grundlehren der
  Mathematischen Wissenschaften [Fundamental Principles of Mathematical
  Sciences]}.
\newblock Springer-Verlag, Berlin, 1992.

\bibitem{RG}
J\"{u}rgen Richter-Gebert.
\newblock {\em Perspectives on projective geometry. A guided tour through real
  and complex geometry}.
\newblock Springer, Berlin, 2011.

\bibitem{Sachs}
Horst Sachs.
\newblock On a spatial analogue of {K}uratowski's theorem on planar graphs---an
  open problem.
\newblock In {\em Graph theory (\L ag\'ow, 1981)}, volume 1018 of {\em Lecture
  Notes in Math.}, pages 230--241. Springer, Berlin, 1983.

\bibitem{HS}
Hans Samuelson.
\newblock Orientability of hypersurfaces in {$\R^n$}.
\newblock {\em Proc. Amer. Math. Soc.}, 22:301--302, 1969.

\bibitem{Shaf1}
Igor~R. Shafarevich.
\newblock {\em Basic algebraic geometry. 1}.
\newblock Springer-Verlag, Berlin, second edition, 1994.
\newblock Translated from the 1988 Russian edition and with notes by Miles
  Reid.

\bibitem{Stanley}
Richard~P. Stanley.
\newblock An introduction to hyperplane arrangements.
\newblock In {\em Geometric combinatorics}, volume~13 of {\em IAS/Park City
  Math. Ser.}, pages 389--496. Amer. Math. Soc., Providence, RI, 2007.

\end{thebibliography}

\end{document}